\theoremstyle{definition}
\newtheorem{theorem}{Theorem}[section]
\newtheorem*{theorem*}{Theorem}
\newtheorem{proposition}[theorem]{Proposition}
\newtheorem{lemma}[theorem]{Lemma}
\newtheorem{alg}[theorem]{Algorithm}
\newtheorem{corollary}[theorem]{Corollary}
\newtheorem{conjecture}[theorem]{Conjecture}
\newtheorem{definition}[theorem]{Definition}
\newtheorem{example}[theorem]{Example}
\theoremstyle{remark}
\newtheorem{remark}[theorem]{Remark}
\newcommand{\rgf}{\text{rgf}}
\newcommand{\inc}{\text{inc}}
\newcommand{\pl}{\text{plane}}
\newcommand{\west}{\omega}
\newcommand{\east}{\varepsilon}
\newcommand{\sgn}{\text{sgn}}
\newcommand{\inv}{\text{Inv}}
\newcommand{\rk}{\text{rk}}
\begin{document}

\title[On an alternating sum of factorials and Stirling numbers of the first kind]{On an alternating sum of factorials and Stirling numbers of the first kind: trees, lattices, and games}

\author{Victor Wang}
\address{
 Department of Mathematics,
 Harvard University,
 Cambridge MA 02138, USA}
\email{vwang@math.harvard.edu}

\subjclass[2020]{Primary 05A15; Secondary 05A05, 05A10, 05A19, 05C05, 91A05}
\keywords{permutations, plane trees, rooted trees, Stirling numbers of the first kind, Tamari lattice, two-player games}

\begin{abstract} 
We study an alternating sum involving factorials and Stirling numbers of the first kind. We give an exponential generating function for these numbers and show they are nonnegative and enumerate the number of increasing trees on $n$ vertices that are won by the second of two players when interpreted as a game tree. We also give a simple description of the quotient from the weak order to the Tamari lattice in terms of plane trees, and give bijections between plane trees, $213$-avoiding permutations, and $312$-avoiding permutations. Finally, for a rooted tree, we give equivalent characterizations of when it describes a game won by the first or second player in terms of the rank-generating function of the lattice of prunings and the Euler characteristic of an associated real variety.
\end{abstract}

\maketitle
\tableofcontents

\section{Introduction}\label{sec:intro}  

Our original motivation for studying the mathematics encountered in this paper stems from a problem involving symmetric functions in noncommuting variables. In 2006, Bergeron, Hohlweg, Rosas, and Zabrocki \cite{BHRZ} defined the $\mathbf x$-basis for symmetric functions in noncommuting variables, motivated by the representation theory of partition lattice algebras. In a talk from the same year, Zabrocki \cite{Zabrocki} observed that the $\mathbf x$-basis appeared to expand with either all nonnegative or all nonpositive coefficients in the $\mathbf e$-basis for symmetric functions in noncommuting variables.

The expansion in the $\mathbf e$-basis can be computed by combining \cite[Equation 4.2]{BHRZ} and \cite[Theorem 3.4]{RosasSagan}:
$$(-1)^{m-1}\mathbf x_{[m]}=\sum_{\tau\ge\sigma}(-1)^{\ell(\sigma)-\ell(\tau)} \frac{(\ell(\tau)-1)!(\lambda(\sigma,\tau)-1)!}{(\lambda(\tau)-1)!}\mathbf e_\sigma.$$
Here, some notation needs to be explained. A \textit{set partition} $\sigma$ of $[m]=\{1,\dots,m\}$ is a family of disjoint sets whose union is $[m]$, and we write $\sigma\vdash [m]$. The sum is over all pairs of set partitions $\tau,\sigma$ of $[m]$ for which every block of $\sigma$ is contained in some block of $\tau$. For a set partition $\sigma$, $\ell(\sigma)$ counts the number of parts of $\sigma$, and $(\lambda(\sigma)-1)!$ is the product of the factorials of one less than the number of elements in each block of $\sigma$. Given $\tau\ge \sigma$, $(\lambda(\sigma,\tau)-1)!$ is the product of the factorials of one less than the number of blocks of $\sigma$ contained in each block of $\tau$.

Zabrocki's observation can be reformulated as the conjecture stated next.
\begin{conjecture}\cite{Zabrocki} \label{conj:pos}
For set partitions $\sigma$, the numbers
$$\sum_{\tau\ge\sigma}(-1)^{\ell(\sigma)-\ell(\tau)} \frac{(\ell(\tau)-1)!(\lambda(\sigma,\tau)-1)!}{(\lambda(\tau)-1)!}$$
are nonnegative.
\end{conjecture}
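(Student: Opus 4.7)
The plan is to prove Conjecture~\ref{conj:pos} in three steps: reduce the sum to depend only on the multiset of block sizes of $\sigma$, compute its exponential generating function, and identify the numbers with an enumeration of increasing trees won by the second player, following the roadmap in the abstract.

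First I would observe that the summand depends only on the multiset $(a_1,\dots,a_k)$ of block sizes of $\sigma$. Reindexing the sum over $\tau\ge\sigma$ as a sum over set partitions $\pi$ of the $k$-element set of $\sigma$-blocks --- with $\pi$-block sizes $c_i$ and induced $\tau$-block sizes $|B_i|=\sum_{j\in\pi_i}a_j$ --- the expression becomes
$$f(a_1,\dots,a_k)=\sum_{\pi\vdash[k]}(-1)^{k-m}\frac{(m-1)!\prod_i(c_i-1)!}{\prod_i(|B_i|-1)!}.$$
The factor $(c_i-1)!$ counts cyclic orderings of the $\sigma$-blocks inside the $i$-th $\tau$-block and $(|B_i|-1)!$ counts cyclic orderings of the full $\tau$-block, so each ratio carries a natural species-theoretic weight that should be exploited in the next step.

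Second, I would compute the exponential generating function. Using the classical identity $\sum_{n,k}c(n,k)\,t^k\,x^n/n!=(1-x)^{-t}$ for the unsigned Stirling numbers of the first kind, combined with the integral $k!=\int_0^\infty e^{-t}t^k\,dt$ to absorb the factorial factors in the numerator, the sum should collapse into a Laplace-type closed form. For the single-variable specialization matching the ``alternating sum of factorials and Stirling numbers'' advertised in the abstract, the EGF plausibly takes the shape $1/(1-\log(1+x))$ or a closely related Lambert-style series.

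Third, to turn the EGF into a nonnegativity statement, I would give it a direct combinatorial interpretation. Per the abstract, the coefficients should count increasing trees on $n$ vertices that are P-positions (second-player wins) in the natural take-away game on rooted trees: players alternately remove a subtree, and the player unable to move loses. The standard recursive characterization of P- and N-positions yields a functional equation on the EGFs of P-trees and N-trees that should reproduce the EGF from the previous step; since it enumerates a set of trees, it is a nonnegative integer, and the conjecture follows. The main obstacle is the reduction from general $\sigma$ to a tractable univariate sequence: the summand does \emph{not} factor as a product over the blocks of $\sigma$ --- for example $f(\{\{1\},\{2\}\})=0$ while $f(\{\{1\}\})\cdot f(\{\{2\}\})=1$ --- so the reduction must go through an EGF composition rather than termwise. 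A secondary difficulty is aligning the game-tree recursion with the alternating signs in the Möbius-type sum; a sign-reversing involution on pairs (coarsening $\tau$, tree $T$) is a natural candidate for realizing the cancellation cleanly.
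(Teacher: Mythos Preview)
The paper does \emph{not} prove Conjecture~\ref{conj:pos}; it is stated as motivation and then explicitly set aside. Immediately after the conjecture, the paper notes that the denominators $(\lambda(\tau)-1)!$ are the obstruction and replaces the conjectured quantity by the denominator-free sum, which depends only on $n=\ell(\sigma)$ and (up to the sign $(-1)^{n-1}$) equals $a_n=\sum_{k=1}^n(-1)^{k-1}(k-1)!c(n,k)$. Everything that follows --- the generating function $\log(1-\log(1-x))$, the increasing-tree interpretation, the game-theoretic argument --- concerns $a_n$, not the quantity in Conjecture~\ref{conj:pos}. The conjecture remains open at the end of the paper.

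Your proposal thus aims at a strictly harder target than the paper reaches, and the gap you yourself flag is fatal. Because the denominator $\prod_i(|B_i|-1)!$ depends on the actual block sizes $a_j$ and not merely on $k=\ell(\sigma)$, the conjectured number is genuinely a function of the full multiset $(a_1,\dots,a_k)$; your example $f(1,1)=0\neq f(1)f(1)$ already shows there is no reduction to a single univariate sequence, so a one-variable EGF cannot carry the full statement. Steps~2 and~3 of your plan --- ``the EGF plausibly takes the shape $1/(1-\log(1+x))$'', ``should collapse into a Laplace-type closed form'', ``should reproduce the EGF'' --- are aspirations, not arguments, and in any case they pertain to the $a_n$ problem, not to Conjecture~\ref{conj:pos}. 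If you want to pursue the conjecture itself, you would need a genuinely multivariate structure (e.g.\ a species or cycle-index argument that tracks all block sizes simultaneously) and a combinatorial model whose weights produce the rational factors $\prod_i(c_i-1)!/\prod_i(|B_i|-1)!$; nothing in the paper supplies this, and your outline does not either.
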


In the foreword to later editions of P\'olya's \textit{How to Solve It} \cite{Polya} Conway attributes the following advice to P\'olya: ``If you can’t solve a problem, then there is an easier problem you can’t solve: find it."

For the combinatorially-minded, to show the nonnegativity of a number often involves showing that the number enumerates a class of combinatorial objects. Part of the difficulty in proving Conjecture~\ref{conj:pos}, then, is the appearance of denominators in the alternating sum. We therefore will instead study the signs of the numbers
$$\sum_{\tau\ge\sigma}(-1)^{\ell(\sigma)-\ell(\tau)} (\ell(\tau)-1)!(\lambda(\sigma,\tau)-1)!$$
for fixed $\sigma$, where we have removed the denominators, and hope to find a combinatorial interpretation. Our first observation is that this sum depends only on the number of parts $n=\ell(\sigma)$ of $\sigma$, and may be rewritten as
$$\sum_{\tau\vdash[n]}(-1)^{n-\ell(\tau)}(\ell(\tau)-1)!(\lambda(\tau)-1)!.$$

Although the problem in this paper stemmed from a problem in symmetric functions in noncommuting variables, we will see that the mathematics involved quickly diverges from the mathematics of our original motivation. We will show in Theorem~\ref{thm:pos} that the numbers 
$$a_n=\sum_{\tau\vdash[n]}(-1)^{\ell(\tau)-1}(\ell(\tau)-1)!(\lambda(\tau)-1)!=\sum_{k=1}^n (-1)^{k-1}(k-1)!c(n,k)$$
are always nonnegative, through a study of increasing trees, rank-generating functions, and two-player games. Here, $c(n,k)$ is a \textit{Stirling number of the first kind}, counting the number of permutations of $n$ with $k$ cycles. To see the equivalence between the two descriptions of $a_n$, use that for $\tau\vdash [n]$, the number $(\lambda(\tau)-1)!$ counts the number of permutations of $n$ whose cycles form the blocks of $\tau$. Note also that the numbers $a_n$ differ from the numbers in the previous paragraph by a sign of $(-1)^{n-1}$. We remark that from writing \cite[Equation 4.2]{BHRZ} and \cite[Theorem 3.4]{RosasSagan} in terms of M\"obius functions we have $a_n=(|\mu|\ast \mu)([n])$, where $\mu$ is the M\"obius function on the lattice of set partitions of $[n]$, and $\ast$ denotes convolution.

Our paper is structured as follows. In Section~\ref{sec:inc}, we compute the exponential generating function of $(a_n)_{n\ge 1}$, and relate the numbers to the rank-generating functions of the lattices of prunings of increasing trees. In Section~\ref{sec:tamari}, we give bijections between plane trees, $213$-avoiding permutations, and $312$-avoiding permutations, and show that the construction in the previous section leads to a lattice quotient from the weak order to the Tamari lattice in terms of plane trees. In Section~\ref{sec:game}, we relate the numbers $a_n$ to finite two-player games, and use this connection to prove combinatorial identities involving the numbers. Finally, in Section~\ref{sec:cell}, we construct a projective variety associated to a rooted tree, and show that the rooted tree is a game tree won by the first or second player depending on whether the Euler characteristic of the real points of the variety is $0$ or $1$.

\section{Increasing trees and generating functions}\label{sec:inc}

Our first order of business in our investigation of the numbers 
$$a_n=\sum_{k=1}^n(-1)^{k-1}(k-1)!c(n,k)$$
is to compute the associated exponential generating function. For an introduction to exponential generation functions, see \cite[Chapter 4]{AOC}.

\begin{proposition}
The exponential generating function of the sequence $(a_n)_{n\ge 1}$ is
$$\sum_{n=1}^\infty \frac{a_nx^n}{n!}=\log(1-\log(1-x)).$$
\end{proposition}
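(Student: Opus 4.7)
The plan is to start from the standard exponential generating function identity for the (unsigned) Stirling numbers of the first kind,
$$\sum_{n\ge k}c(n,k)\frac{x^n}{n!}=\frac{(-\log(1-x))^k}{k!},$$
which follows from the fact that $c(n,k)$ counts permutations of $[n]$ with $k$ cycles together with the exponential formula applied to the species of cyclic orders (whose EGF is $-\log(1-x)$). This identity is the key input; once it is in hand the proposition is a short manipulation.

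Given that, I would substitute the definition of $a_n$ into $\sum_{n\ge 1}a_n x^n/n!$ and interchange the order of summation:
\begin{align*}
\sum_{n=1}^\infty \frac{a_n x^n}{n!}
&=\sum_{n=1}^\infty\sum_{k=1}^n (-1)^{k-1}(k-1)!\,c(n,k)\,\frac{x^n}{n!}\\
&=\sum_{k=1}^\infty (-1)^{k-1}(k-1)!\sum_{n\ge k}c(n,k)\frac{x^n}{n!}\\
&=\sum_{k=1}^\infty \frac{(-1)^{k-1}}{k}\bigl(-\log(1-x)\bigr)^k.
\end{align*}
The factor $(k-1)!/k!=1/k$ is exactly what is needed to convert the inner series into the Mercator series for $\log(1+y)$.

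Finally, I would recognize the last sum as $\log(1+y)$ evaluated at $y=-\log(1-x)$, giving $\log(1-\log(1-x))$, which is the claimed generating function. The only nontrivial point is a convergence-of-formal-power-series check: since $-\log(1-x)$ has no constant term, the substitution into $\log(1+y)$ is well-defined as a formal power series, so the manipulations are justified. I do not anticipate any real obstacle here; the proof is essentially a two-line calculation once the Stirling EGF is invoked.
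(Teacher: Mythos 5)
Your proposal is correct and follows essentially the same route as the paper: invoke the standard EGF $\sum_{n\ge k}c(n,k)x^n/n!=(-\log(1-x))^k/k!$, swap the order of summation, and observe that $(k-1)!/k!=1/k$ turns the result into the Mercator series for $\log(1+y)$ at $y=-\log(1-x)$. The remark about formal-power-series substitution being legitimate because $-\log(1-x)$ has no constant term is a nice touch the paper leaves implicit.
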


\begin{proof}
By \cite[Table 4.1]{AOC}, for positive integers $k$,
$$\sum_{n=1}^\infty \frac{c(n,k)x^n}{n!}=\frac{(-\log(1-x))^k}{k!}.$$
Therefore,
\begin{align*}
\sum_{n=1}^\infty \frac{a_nx^n}{n!}&=\sum_{k=1}^\infty \frac{(-1)^{k-1}(k-1)!(-\log(1-x))^k)}{k!}\\
&=\sum_{k=1}^\infty\frac{-(\log(1-x))^k}{k}=\log(1-\log(1-x)).
\end{align*}
\end{proof}

To work towards a combinatorial interpretation of the numbers $a_n$, we begin by constructing a signed set enumerated by the terms of the alternating sum described by $a_n$.

For any permutation of $n$ with $k$ cycles, the number $(k-1)!$ counts the number of reorderings of its cycle decomposition such that the cycle containing $1$ is first. When writing the cycle decomposition of a permutation, we will write the smallest number in each cycle first. As an example, the $(3-1)!=2$ reorderings of the cycle decomposition $(12)(364)(5)$ counted are itself and $(12)(5)(364)$.

Given one of these reorderings, we may write a permutation in one-line notation with separators to capture the order of the numbers appearing, with separators demarcating distinct cycles. For example, $(12)(5)(364)$ becomes $12|5|364$, with underlying permutation $125364$ in one-line notation. Therefore, if we let $W_n$ denote the signed set of permutations in $S_1\times S_{n-1}$ (permutations of $n$ fixing $1$) in one-line notation with separators such that the first element of each block is the smallest, and let the sign of such a permutation with separators be $(-1)$ to the number of separators, we see that
$$a_n=\sum_{x\in W_n}\sgn(x).$$
For example, the elements of $W_3$ with sign $+1$ are $\{123,132,1|2|3,1|3|2\}$, and the elements of $W_3$ with sign $-1$ are $\{1|23,12|3,13|2\}$, so $a_3=4-3=1$.

For a permutation $\alpha \in S_1\times S_{n-1}$, let $L_\alpha$ denote the partial order on elements of $W_n$ with underlying permutation $\alpha$, ordered by inclusion of separators. An \textit{inversion} of $\alpha$ is a pair $(i,j)$ such that $i<j$ and $\alpha(i)>\alpha(j)$. The \textit{first inversion} from position $i$ (if it exists) is the inversion $(i,j)$ such that there are no inversion $(i,j')$ with $j'<j$.

Let $\gamma(\alpha)$ denote the \textit{first inversion tree} of $\alpha$, which is a rooted tree with vertices labelled $1,\dots,n$ with root labelled $1$ and $j$ the parent of $i$ whenever $(\alpha^{-1}(i),\alpha^{-1}(j))$ is a first inversion or $j=1$ and there are no inversions from $\alpha^{-1}(i)$. By construction, $\gamma(\alpha)$ is an \textit{increasing tree}, meaning the label of a vertex is always greater than the label of its parent. Let $\inc(n)$ denote the set of increasing trees on vertices labelled $1,\dots,n$. We will draw rooted trees and increasing trees with each vertex above its children.

For a rooted tree $T$, a \textit{pruning} of $T$ is a rooted tree with the same root and edges given by a subset of the edges of $T$. Let $L_T$ denote the lattice of prunings of $T$, ordered by inclusion of edges. $L_T$ is a \textit{lattice} (every pair of elements $x,y$ has a least upper bound $x\vee y$, or \textit{join}, and a greatest lower bound $x\wedge y$, or \textit{meet}) and is \textit{distributive} (join and meet distribute over each other), because it is isomorphic to the lattice of order ideals of $\hat T$. Here, $\hat T$ is the partial order on the non-root vertices of $T$, where $v\le w$ if and only if $v$ is ancestor of $w$ or is $w$.


\begin{lemma}\label{lem:lattice}
For permutations $\alpha\in S_1\times S_{n-1}$, $L_\alpha\cong L_{\gamma(\alpha)}$.
\end{lemma}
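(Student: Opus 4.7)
My plan is to exhibit an explicit lattice isomorphism
$$\phi\colon L_\alpha \to L_{\gamma(\alpha)}, \qquad S \mapsto \{\alpha(i) : i \in S\},$$
identifying the elements of $L_{\gamma(\alpha)}$ with the order ideals of $\hat{\gamma(\alpha)}$ as the paper has set up. Both posets are ordered by subset inclusion on their underlying sets in $\{2,\dots,n\}$, and $\phi$ together with its inverse $I \mapsto \alpha^{-1}(I)$ preserves inclusion. So the entire content will be that $\phi$ restricts to a bijection between the valid separator placements of $\alpha$ and the order ideals of $\hat{\gamma(\alpha)}$.

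The bijection will rest on the following characterization of validity, which I would prove first:
\begin{center}
$S\subseteq\{2,\dots,n\}$ is a valid placement if and only if whenever $i\in S$ has a first inversion $(i,j')$, then $j'\in S$.
\end{center}
Given this, the bijection is immediate: by the construction of $\gamma(\alpha)$, the non-root parent of the vertex $\alpha(i)$ equals $\alpha(j')$ precisely when the first inversion $(i,j')$ from position $i$ exists. So the displayed validity condition translates under $\phi$ into ``for each $v\in\phi(S)$, the parent of $v$ in $\gamma(\alpha)$ is either the root or lies in $\phi(S)$,'' which is exactly the order-ideal condition in $\hat{\gamma(\alpha)}$.

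The ``if'' direction of the characterization should be quick: if some block $[s,m]$ with $s\in S$ had $\alpha(s)$ not equal to its minimum, letting $k\in(s,m]$ be the smallest position with $\alpha(k)<\alpha(s)$ would make $(s,k)$ a first inversion from $s$; the hypothesis would force $k\in S$, but then $k$ would be a separator in the interior of $[s,m]$, a contradiction.

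The main obstacle is the ``only if'' direction. Given a valid $S$ with $i\in S$ having first inversion $(i,j')$, I would let $s$ denote the start of the block containing $j'$. Since $\alpha(j')<\alpha(i)$ violates minimality of $\alpha(i)$ in the block starting at $i$, one sees that $j'$ lies in a strictly later block than $i$, and the interval structure of blocks then rules out $s=1$ and $s\leq i$. Hence $s>i$, and if strictly $s<j'$, the definition of first inversion at $i$ gives $\alpha(s)>\alpha(i)$, while validity of the block starting at $s$ gives $\alpha(s)\leq\alpha(j')<\alpha(i)$---a contradiction. So $s=j'\in S$, completing the proof.
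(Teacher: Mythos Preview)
Your proposal is correct and takes essentially the same approach as the paper: both arguments establish that a set $S$ of separator positions is valid precisely when, for each $i\in S$ with a first inversion $(i,j)$, the target $j$ also lies in $S$, and then identify this with the order-ideal condition in $\hat{\gamma(\alpha)}$. The paper compresses the ``only if'' direction into a single parenthetical observation (that $\alpha(i),\dots,\alpha(j-1)$ all exceed $\alpha(j)$, so the block containing $j$ cannot start earlier than $j$), whereas you spell out the same contradiction in more detail; your ``if'' direction is likewise just an explicit unpacking of what the paper leaves implicit.
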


\begin{proof}
Note a separator may be placed preceding position $i$ in $\alpha$ if and only if there are no inversions from $i$ or there is already a separator preceding position $j$, where $(i,j)$ is the first inversion from $i$. (If there is a separator preceding position $i$, a block containing position $j$ cannot start before position $j$ because $\alpha(i),\dots,\alpha(j-1)$ are all greater than $\alpha(j)$.) That is, the positions of separators in an element of $W_n$ with underlying permutation $\alpha$ must correspond to an order ideal of $\hat\gamma(\alpha)$. It follows, then, that $L_\alpha\cong L_{\gamma(\alpha)}$. 
\end{proof}

A \textit{plane tree} is a rooted tree with the children of each vertex ordered from left to right. Note every increasing tree is associated with a plane tree $\rho(T)$, by ordering the children of each vertex from left to right in order of increasing labels. The \textit{postorder traversal} of a plane tree traverses a plane tree recursively, visiting the subtrees of a vertex first from left to right, then the vertex itself.

\begin{lemma}\label{lem:inc}
The map $\gamma: S_1\times S_{n-1}\to \inc(n)$ is a bijection. For $T\in\inc(n)$, $\gamma^{-1}(T)\in S_1\times S_{n-1}$ is the permutation satisfying for $2\le i \le n$, $(\gamma^{-1}(T))(i)$ is the label in $T$ of the $(i-1)$th vertex in postorder traversal of $\rho(T)$.
\end{lemma}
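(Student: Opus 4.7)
The plan is to verify the proposed formula for $\gamma^{-1}$ by exhibiting $\beta\colon \inc(n)\to S_1\times S_{n-1}$ sending $T$ to the permutation $\alpha$ with $\alpha(1)=1$ and $\alpha(i)$ equal to the label of the $(i-1)$th vertex in postorder traversal of $\rho(T)$ for $2\le i\le n$, and then showing $\gamma\circ\beta=\text{id}_{\inc(n)}$. Since it is standard that $|\inc(n)|=(n-1)!=|S_1\times S_{n-1}|$ (for instance, by inserting labels $2,3,\dots,n$ one at a time, each at a vertex that determines the next edge), one-sided inverses suffice: from $\gamma\beta=\text{id}$ we get that $\gamma$ is surjective between finite sets of equal size, hence a bijection, and $\beta=\gamma^{-1}$.

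The combinatorial heart of the proof is the following claim about postorder on $\rho(T)$: for any non-root vertex $v$ of an increasing tree $T$, every vertex strictly between $v$ and $p(v)$ (its parent in $T$) in the postorder of $\rho(T)$ has label greater than $v$, and the first vertex after $v$ in postorder with label less than $v$ is exactly $p(v)$. This rests on two standard properties of postorder: a subtree of $\rho(T)$ occupies a contiguous block ending at its root; and the children of any vertex of $\rho(T)$ are ordered by increasing label. Combining these, after $v$ we see either the next younger sibling $v'$ of $v$ (with $v'>v$) followed by its subtree, then further siblings and their subtrees (all with labels $>v$ by the increasing property), and eventually $p(v)$; or, if $v$ is the last child of $p(v)$, immediately $p(v)$.

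I would then verify $\gamma(\beta(T))=T$ by considering each non-root $v$ separately. Let $i$ be the position of $v$ in $\alpha=\beta(T)$. If $p(v)>1$, then $p(v)$ occurs at some position $j>i$ in $\alpha$ (since $p(v)$ is non-root and thus not moved to position $1$), and by the claim, $(i,j)$ is the first inversion from $i$; the definition of $\gamma$ then assigns $p(v)=\alpha(j)$ as the parent of $v$. If $p(v)=1$, then by the claim every label following $v$ in the postorder of $\rho(T)$ up to the root is greater than $v$, and the root itself has been relocated to position $1$ in $\alpha$; hence there are no inversions from position $i$, and $\gamma$ assigns $1$ as the parent of $v$. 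Either way, the parent assigned by $\gamma(\beta(T))$ agrees with $T$, so $\gamma(\beta(T))=T$.

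The main obstacle is correctly handling the two cases just described — in particular, keeping track of the fact that moving the root from the end of the postorder (its natural position) to position $1$ of $\alpha$ is precisely what converts ``$v$'s first smaller successor is the root'' into ``$v$ has no inversions,'' matching the two halves of the definition of $\gamma$. Once that bookkeeping is done, the postorder-contiguity argument does all the work.
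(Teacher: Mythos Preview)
Your proof is correct. Both you and the paper combine the same two ingredients: the postorder combinatorics of $\rho(T)$ (subtrees occupy contiguous blocks; siblings are in increasing label order) and the cardinality count $|\inc(n)|=(n-1)!$. The difference is purely one of direction: the paper starts from a permutation $\alpha$, argues that children precede parents, siblings appear in increasing order, and sibling subtrees do not interleave, and concludes that $\alpha$ coincides with the postorder word of $\rho(\gamma(\alpha))$; this shows $\beta\gamma=\mathrm{id}$, hence $\gamma$ is injective. You instead start from a tree $T$, read off $\alpha=\beta(T)$, and use the ``first smaller label after $v$ in postorder is $p(v)$'' observation to check that $\gamma(\beta(T))=T$; this shows $\gamma\beta=\mathrm{id}$, hence $\gamma$ is surjective. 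Each then finishes by equal cardinalities. Your route has the slight advantage of verifying the stated formula for $\gamma^{-1}$ directly, while the paper's route makes the contiguity of subtrees in $\alpha$ (used implicitly later) more explicit; but the underlying argument is the same.
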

\begin{proof}
Let $\alpha\in S_1\times S_{n-1}$. Note whenever the vertex $2\le i \le n$ is a parent of the vertex labelled $j$ in $\gamma(\alpha)$ that $(\alpha^{-1}(j),\alpha^{-1}(i))$ is a first inversion, so $j$ is left of $i$ in the one-line notation of $\alpha$. 

Now suppose two vertices with a common parent are labelled by $i<j$. Any descendant of $i$ is left of $i$ in $\alpha$ and any descendant of $j$ is left of $j$ in $\alpha$. Moreover, $i$ is left of $j$, or else the first inversion from position $\alpha^{-1}(j)$ is would be to either $\alpha^{-1}(i)$ or a position left of it, and so $i$ and $j$ could not have a common parent. 

We next show that all descendants of $j$ appear right of $i$ in $\alpha$. Since $\gamma(\alpha)$ is an increasing tree, any descendant $j'$ of $j$ must satisfy $j'>j$. Then $j'$ cannot be left of $i$, or else since $j'>i$, the parent of $j'$ must be left of $i$, which is impossible by induction.

Since $\alpha\in S_1\times S_{n-1}$, the one-line notation of $\alpha$ must exactly describe the labels of $\gamma(\alpha)$ in the postorder traversal of $\rho(\gamma(\alpha))$, except with $1$ moved from the end to the beginning. 

We've seen that $\gamma$ is injective. To see that it is bijective, note the cardinalities of $S_1\times S_{n-1}$ and $\inc(n)$ are both $(n-1)!$, as we can build an increasing tree inductively with $i-1$ choices for the parent of the vertex labelled $i$.
\end{proof}
\begin{example}
Below is the increasing tree $\gamma(1623574)$ drawn with children of a vertex drawn from left to right in order of increasing labels. The labels in $\gamma(1623574)$ of the postorder traversal of $\rho(\gamma(1623574))$ are $6\to 2\to 3\to 5\to 7\to 4\to 1$.
\begin{center}
\begin{tikzpicture}
\coordinate (1) at (0,0);
\coordinate (2) at (-.5,-.5);
\coordinate (3) at (-.5,-1);
\coordinate (4) at (0,-.5);
\coordinate (5) at (.5,-.5);
\coordinate (6) at (.25,-1);
\coordinate (7) at (.75,-1);

\filldraw [black] (1) circle (2pt);
\filldraw [black] (2) circle (2pt);
\filldraw [black] (3) circle (2pt);
\filldraw [black] (4) circle (2pt);
\filldraw [black] (5) circle (2pt);
\filldraw [black] (6) circle (2pt);
\filldraw [black] (7) circle (2pt);
\draw[-] (1) to (2) to (3);
\draw[-] (1) to (4);
\draw[-] (5) to (7);
\draw[-] (1) to (5) to (6);
\node[] at (-0.25,0) { $1$};
\node[] at (-0.75,-.5) { $2$};
\node[] at (0.75,-.5) { $4$};
\node[] at (-0.75,-1) { $6$};
\node[] at (-0.25,-.5) { $3$};
\node[] at (0,-1) { $5$};
\node[] at (1,-1) { $7$};
\end{tikzpicture}
\end{center}

\end{example}
Because $L_{\alpha}\cong L_{\gamma(\alpha)}$ is a finite distributive lattice, it has a \textit{rank-generating function} $\rgf_{L_{\alpha}}(q)=\rgf_{L_{\gamma(\alpha)}}(q)$ obtained by summing $q$ to the number of separators over all elements in $L_\alpha$. The following theorem then holds via Lemmas~\ref{lem:lattice} and \ref{lem:inc} because both sides enumerate the elements of $W_n$ with weight given by $q$ to the number of separators.

\begin{theorem}
For positive integers $n$,
    $$\sum_{k=1}^{n}(k-1)!c(n,k)q^{k-1}=\sum_{T\in\inc(n)}\rgf_{L_T}(q).$$
\end{theorem}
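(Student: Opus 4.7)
The plan is to show that both sides compute the same generating function over $W_n$, namely $\sum_{x\in W_n} q^{s(x)}$, where $s(x)$ denotes the number of separators in $x$. The prior lemmas already provide almost all of the content, so the proof should amount to unpacking the definitions carefully on each side.

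First I would handle the left-hand side. Recall that an element of $W_n$ is built from a permutation $\pi\in S_n$ together with an admissible reordering of its cycle decomposition (cycle containing $1$ first, smallest element of each cycle first) and a choice of separators placed between the cycles. A permutation with $k$ cycles contributes $(k-1)!$ such reorderings, each carrying exactly $k-1$ separators. Summing $q^{s(x)}$ over $x\in W_n$ and grouping by the number of cycles $k$ of the underlying permutation yields $\sum_{k=1}^n (k-1)!\,c(n,k)\,q^{k-1}$, which is exactly the LHS.

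Next I would handle the right-hand side. By Lemma~\ref{lem:inc}, the map $\gamma$ is a bijection between $S_1\times S_{n-1}$ and $\inc(n)$, so
\[
\sum_{T\in\inc(n)}\rgf_{L_T}(q)=\sum_{\alpha\in S_1\times S_{n-1}}\rgf_{L_{\gamma(\alpha)}}(q).
\]
By Lemma~\ref{lem:lattice}, $L_\alpha\cong L_{\gamma(\alpha)}$, and the isomorphism in the proof of that lemma preserves the number of separators (which corresponds to the rank in $L_\alpha$). Hence $\rgf_{L_{\gamma(\alpha)}}(q)=\sum_{x\in L_\alpha}q^{s(x)}$. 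Since $W_n$ is the disjoint union of the posets $L_\alpha$ as $\alpha$ ranges over $S_1\times S_{n-1}$, summing over $\alpha$ recovers $\sum_{x\in W_n}q^{s(x)}$, matching the LHS.

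There is no real obstacle here beyond careful bookkeeping: the key inputs (the bijection $\gamma$, the lattice isomorphism $L_\alpha\cong L_{\gamma(\alpha)}$, and the fact that under the isomorphism the rank records the number of separators) are already established. The only thing worth stating explicitly is the partition $W_n=\bigsqcup_\alpha L_\alpha$, which follows immediately from the definition of $L_\alpha$ as the set of elements of $W_n$ with underlying permutation $\alpha$.
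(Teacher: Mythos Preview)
Your proposal is correct and follows exactly the paper's approach: both sides are identified with $\sum_{x\in W_n}q^{s(x)}$, the left via the cycle-reordering interpretation of $W_n$ and the right via Lemmas~\ref{lem:lattice} and~\ref{lem:inc} together with the partition $W_n=\bigsqcup_{\alpha}L_\alpha$. The only cosmetic wrinkle is your phrase ``and a choice of separators placed between the cycles,'' which momentarily suggests an extra degree of freedom; since you immediately note that a reordering with $k$ cycles carries exactly $k-1$ separators, the meaning is clear and the count is right.
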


\begin{example}
For $n=3$, we have $0!c(3,1)+1!c(3,2)q+2!c(3,3)q^2 = (1+2q+q^2)+(1+q+q^2)$ from the following increasing trees.
\begin{center}
\begin{tikzpicture}
\coordinate (1) at (0,0);
\coordinate (2) at (-.25,-.5);
\coordinate (3) at (.25,-.5);
\filldraw [black] (1) circle (2pt);
\filldraw [black] (2) circle (2pt);
\filldraw [black] (3) circle (2pt);
\draw[-] (1) to (2);
\draw[-] (1) to (3);
\node[] at (-0.25,0) { $1$};
\node[] at (-0.5,-.5) { $2$};
\node[] at (0.5,-.5) { $3$};
\coordinate (1) at (4,0);
\coordinate (2) at (4,-.5);
\coordinate (3) at (4,-1);
\filldraw [black] (1) circle (2pt);
\filldraw [black] (2) circle (2pt);
\filldraw [black] (3) circle (2pt);
\draw[-] (1) to (2) to (3);
\node[] at (3.75,0) { $1$};
\node[] at (3.75,-.5) { $2$};
\node[] at (3.75,-1) { $3$};

\end{tikzpicture}
\end{center}
\end{example}

\begin{corollary} \label{cor:-1}
For positive integers $n$,
$$a_n=\sum_{T\in\inc(n)}\rgf_{L_T}(-1).$$
\end{corollary}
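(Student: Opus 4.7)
The plan is to obtain this as an immediate specialization of the preceding theorem. Since the identity
$$\sum_{k=1}^{n}(k-1)!\,c(n,k)\,q^{k-1}=\sum_{T\in\inc(n)}\rgf_{L_T}(q)$$
holds as an equality of polynomials in $q$, I would simply substitute $q=-1$. The left-hand side then reads $\sum_{k=1}^n (-1)^{k-1}(k-1)!\,c(n,k)$, which is exactly the definition of $a_n$ given at the start of the section, while the right-hand side becomes $\sum_{T\in\inc(n)} \rgf_{L_T}(-1)$, as desired.

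There is essentially no obstacle here: both sides of the theorem are polynomials, so substituting a specific value of $q$ is valid, and the verification that the $q=-1$ specialization of the left side reproduces $a_n$ is just a matching of definitions. I would write the proof as a two-line display: state the theorem at $q=-1$, then identify the left-hand side with $a_n$. No additional combinatorial content is needed beyond what has already been established in Lemmas~\ref{lem:lattice} and \ref{lem:inc} and the theorem preceding the corollary.
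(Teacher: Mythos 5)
Your proposal is correct and matches the paper's approach: the corollary is stated without separate proof precisely because it is the immediate specialization $q=-1$ of the preceding theorem, with the left-hand side matching the definition $a_n=\sum_{k=1}^n(-1)^{k-1}(k-1)!c(n,k)$. Nothing further is needed.
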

\section{Plane trees and a quotient to the Tamari lattice}\label{sec:tamari}

In this section, we make an excursion to show that $\rho\circ\gamma$ induces a lattice quotient from the weak order on $S_1\times S_{n-1}$ to the Tamari lattice described via plane trees. The \textit{weak order} is a lattice structure on $S_1\times S_{n-1}$ where $\alpha\le\beta$ if and only if the inversion set of $\alpha$ is contained in the inversion set of $\beta$. Write $\inv(\alpha)$ for the set of inversions of $\alpha$. A \textit{lattice quotient} is a surjective map of lattices that is order-preserving and preserves joins and meets. The Tamari lattice, first introduced by Tamari in his 1951 thesis, was shown to arise from a lattice quotient of the weak order in \cite{Reading}.

We begin by describing stack-based algorithms to label plane trees. We will write $\pl(n)$ to denote the set of plane trees on $n$ vertices. A \textit{stack} is a ``last in, first out'' data structure, analogous to a stack of books on a table. It has two operations: \textit{push}, which adds an element to the stack, and \textit{pop}, which removes the element in the stack added last. In this section, when we include an additional parameter $i$ to the push or pop operations of a stack, we will understand it to mean to label the vertex being pushed or popped with the label $i$.

Our first algorithm takes in the root of a plane tree and gives the \textit{eastpush-labelling} of the plane tree.

\begin{alg}$ $
\begin{algorithmic}
\State $\text{stack} \gets \text{[ ]}$
\State $i \gets 1$
\State stack.push(root, $i$)
\State $i\gets i+1$
\While {stack not empty}
\State $\text{vertex}\gets\text{stack.pop()}$
\For {child of vertex from left to right}
\State stack.push(child, $i$)
\State $i\gets i+1$
\EndFor
\EndWhile
\end{algorithmic}
\end{alg}

For this section, our analysis will be simplified if we consider the root of our plane tree to have a parent. That way, the notion of grandparent is well-defined for all vertices other than the root. We also extend our notion of left, so we say that $v$ is left of $w$ if and only if $v$ is contained in a subtree rooted at a vertex that is a left sibling of the root of a subtree containing $w$. Note $v$ appears before $w$ in the postorder traversal if and only if either $v$ is left of $w$ or $w$ is an ancestor of $v$.

\begin{lemma}\label{lem:east}
For $T\in \pl(n)$, the vertices labelled before a given vertex in the eastpush-labelling of $T$ are
\begin{enumerate}
    \item all vertices right of its parent,
    \item all children of non-parent ancestors, and
    \item all left siblings.
\end{enumerate}
\end{lemma}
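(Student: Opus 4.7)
The plan is to prove the lemma by induction on the depth $d$ of $v$ from the root. The base case $d=0$ is immediate: when $v$ is the root, it is the first vertex pushed, so nothing is labelled before it, and all three asserted sets are empty (under the super-root convention, the root has no right neighbours, no non-parent ancestors with children, and no siblings).

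For the inductive step, let $v$ have depth $d+1$, and write $v_1$ for its parent and $v_2$ for its grandparent. The key algorithmic observation is that, since the children of $v_2$ are pushed left-to-right in one batch when $v_2$ is popped, the vertex $v_1$ sits strictly below all of its right siblings in the stack immediately after this batch. By the stack discipline, each right sibling of $v_1$ must be popped and its entire subtree fully processed (in the eastpush sense) before $v_1$ itself is popped. Consequently, the vertices labelled strictly between $v_1$ and $v$ are exactly the subtrees rooted at the right siblings of $v_1$, together with the left siblings of $v$ (the latter being pushed as a batch during the pop of $v_1$, before $v$).

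Combining this algorithmic analysis with the inductive hypothesis applied to $v_1$, the vertices labelled before $v$ become
\[
(\text{right of } v_2)\;\cup\;(\text{children of non-parent ancestors of } v_1)\;\cup\;(\text{left siblings of } v_1)\;\cup\;\{v_1\}\;\cup\;(\text{subtrees at right siblings of } v_1)\;\cup\;(\text{left siblings of } v).
\]
I would then match this against the three categories of the lemma. Unwinding the definition of ``left of'' over the chain of ancestors gives the identity \emph{right of $v_1$} = \emph{right of $v_2$} $\cup$ \emph{subtrees rooted at right siblings of $v_1$}, which recovers Category~1. The children of the non-parent ancestors $v_2,v_3,\dots$ of $v$ split into the children of $v_2$ (that is, $v_1$ together with all its siblings) and the children of $v_3,v_4,\dots$ (the non-parent ancestors of $v_1$), which gives Category~2; and the left siblings of $v$ form Category~3.

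The main obstacle will be careful bookkeeping: handling the super-root convention so that Category~2 correctly accounts for the root, and allowing for the mild overlap between Categories~1 and 2 at the right siblings of $v_1$ (they contribute both as children of $v_2$ and as a piece of ``right of $v_1$''). Once the algorithmic claim about the processing between $v_1$'s push and $v$'s labelling is established, the rest is a routine set-theoretic verification.
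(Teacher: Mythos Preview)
Your proposal is correct and follows essentially the same approach as the paper's own proof: both argue by induction on depth, apply the inductive hypothesis to the parent $v_1$, observe that between $v_1$'s push and $v$'s push the algorithm labels exactly the subtrees rooted at the right siblings of $v_1$ together with the left siblings of $v$, and then regroup the resulting union into the three categories. Your write-up is somewhat more explicit than the paper's about the set identity $\{\text{right of }v_1\}=\{\text{right of }v_2\}\cup\{\text{subtrees at right siblings of }v_1\}$ and about the harmless overlap between Categories~1 and~2 at the right siblings of $v_1$, but the underlying argument is the same.
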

\begin{proof}
    We proceed by induction. The statement is vacuously true for the root of $T$.

    For the inductive step, consider a vertex being pushed onto the stack. By the time its parent was pushed onto the stack, the vertices that had been labelled were all vertices right of the vertex's grandparent, all children of ancestors that are not the vertex's parent or grandparent, all left siblings of the parent, and the parent. 
    
    Because the vertex's grandparent's children were pushed from left to right, the parent is popped after all of its right siblings are popped. Between the parent being pushed and popped, the algorithm traverses all subtrees rooted at right siblings of the parent, and labels everything right of the vertex's parent but not right of the vertex's grandparent.

    Finally, when the parent is being processed, before the vertex is pushed, all left siblings of the vertex are pushed onto the stack. Putting this altogether, we see that the vertices labelled before the vertex is pushed onto the stack consist of everything right of its parent, all children of nonparent ancestors, and all left siblings.
\end{proof}

Our second algorithm constructs the \textit{westpop-labelling} of a plane tree.

\begin{alg}$ $
\begin{algorithmic}
\State $\text{stack} \gets \text{[ ]}$
\State $i \gets 1$
\State stack.push(root)
\While {stack not empty}
\State $\text{vertex}\gets\text{stack.pop($i$)}$
\State $i \gets i+1$
\For {child of vertex from right to left}
\State stack.push(child)
\EndFor
\EndWhile
\end{algorithmic}
\end{alg}

\begin{lemma}\label{lem:west}
For $T\in \pl(n)$, the vertices labelled before a given vertex in the westpop-labelling of $T$ are
\begin{enumerate}
\item all vertices to its left, and
\item all ancestors.
\end{enumerate}
\end{lemma}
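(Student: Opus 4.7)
The plan is to proceed by induction on the depth of a vertex in $T$, mirroring the structure of the proof of Lemma~\ref{lem:east}. First I would observe that the westpop algorithm is simply a preorder depth-first traversal of $T$: whenever a vertex is popped, its children are pushed right-to-left, so the leftmost child ends up on top of the stack and is popped (and hence labelled) next. Consequently, after popping a vertex $v$, the algorithm fully processes the subtrees of $v$'s children from left to right before returning to anything that was on the stack below $v$.

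For the base case, the root is labelled first, and it has no ancestors and nothing to its left, so the claim holds. For the inductive step, let $v$ be a non-root vertex with parent $p$. By the inductive hypothesis applied to $p$, the vertices labelled before $p$ are exactly the ancestors of $p$ together with the vertices left of $p$. Between $p$ being popped and $v$ being popped, the algorithm pops each left sibling of $v$ in turn and fully traverses its subtree, by the observation above. Hence the vertices labelled strictly before $v$ consist of the ancestors of $p$, $p$ itself, the vertices left of $p$, and all descendants of left siblings of $v$.

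The ancestors of $p$ together with $p$ give exactly the ancestors of $v$, so it remains to show that the vertices left of $p$ together with the descendants of left siblings of $v$ are exactly the vertices left of $v$. I would unpack the definition of ``left of'': if $u$ is left of $v$, it lies in a subtree rooted at a left sibling $s$ of some vertex $t$ whose subtree contains $v$. A short case analysis on whether $t=v$, $t=p$, or $t$ is a proper ancestor of $p$ shows that $u$ is either a descendant of a left sibling of $v$ or is left of $p$. Conversely, descendants of left siblings of $v$ are clearly left of $v$, and any vertex left of $p$ is left of $v$ since every ancestor of $p$ is an ancestor of $v$.

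The main obstacle is managing the definition of ``left of'' carefully through the case analysis in the last paragraph; the stack dynamics themselves are just an unfolding of a preorder DFS and require no real subtlety. I expect the overall proof to be notably shorter than that of Lemma~\ref{lem:east}, since the westpop labelling sorts already-labelled vertices into the two conceptually cleaner classes of ancestors and left-vertices, rather than the three classes appearing in Lemma~\ref{lem:east}.
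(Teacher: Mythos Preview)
Your proposal is correct and follows essentially the same approach as the paper: both proceed by induction applying the hypothesis to the parent $p$, observe that between popping $p$ and popping $v$ the algorithm completely processes the subtrees of $v$'s left siblings, and conclude by combining ancestors of $p$ with $p$ and left-of-$p$ vertices with left-sibling subtrees. The paper leaves the identification ``left of $p$ together with descendants of left siblings of $v$ equals left of $v$'' implicit, whereas you spell out the case analysis on $t$; that extra care is fine but not strictly needed.
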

\begin{proof}
    We proceed by induction. The statement is vacuously true for the root of $T$.

    For the inductive step, consider a vertex being popped off of the stack. By the time its parent was being processed, the vertices that had been labelled were all vertices left of the vertex's parent and all ancestors. While the parent was being processed, the vertex and its siblings are pushed onto the stack from right to left.

    Thus, before the vertex is popped, all left siblings and all vertices in any subtree rooted at them must be processed by the algorithm. So before the vertex is popped, the vertices that have already been labelled are all vertices to its left and all ancestors.
\end{proof}

By Lemmas~\ref{lem:east} and \ref{lem:west}, the eastpush- and westpop-labellings of a plane tree have labels increasing among children of a vertex from left to right and from parents to children. Let $\east,\west:\pl(n)\to\inc(n)$ denote the eastpush- and westpop-labellings of a plane tree, respectively. These labellings are related by $\gamma^{-1}$ to 213- and 312-avoiding permutations. A permutation $\alpha\in S_1\times S_{n-1}$ \textit{avoids $213$} (respectively $312$) if there exists indices $i<j<k$ such that $\alpha(i),\alpha(j),\alpha(k)$ are in the same relative order as $2,1,3$ (respectively, $3,1,2$).

\begin{proposition}
    For $T\in \pl(n)$,
    \begin{enumerate}
        \item $\gamma^{-1}(\east(T))$ is $213$-avoiding, and
        \item $\gamma^{-1}(\west(T))$ is $312$-avoiding.
    \end{enumerate}
\end{proposition}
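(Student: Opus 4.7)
The plan is to use Lemma~\ref{lem:inc} to interpret each $\gamma^{-1}$ as a readout of labels in postorder, and then to rule out the forbidden patterns by case analysis using Lemmas~\ref{lem:east} and~\ref{lem:west}. I would first observe that since the eastpush and westpop labelings each assign increasing labels to the children of every vertex from left to right, we have $\rho(\east(T)) = \rho(\west(T)) = T$; so by Lemma~\ref{lem:inc}, $\gamma^{-1}(\east(T))$ in one-line notation is $1$ followed by the eastpush labels of the non-root vertices of $T$ listed in the postorder of $T$, and similarly for $\gamma^{-1}(\west(T))$. Since $1$ is the minimum value, the initial position cannot participate in any $213$- or $312$-pattern, so it would suffice to consider triples of non-root vertices $u, v, w$ occurring in this order in the postorder.

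For the westpop case, Lemma~\ref{lem:west} gives the clean characterization that $l_W(u) < l_W(v)$ iff $u$ is left of $v$ or an ancestor of $v$. A candidate $312$-pattern would satisfy $l_W(v) < l_W(w) < l_W(u)$; combining this with the postorder constraint, which precludes $v$ or $w$ from being left of $u$, would force both $v$ and $w$ to be proper ancestors of $u$, and hence ancestor-descendant of each other. But $l_W(v) < l_W(w)$ together with $v$ preceding $w$ in postorder would force $v$ to be left of $w$, excluding ancestor-descendant and yielding a contradiction.

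The eastpush/$213$ case is more delicate, and this is where I expect the main obstacle to lie. From Lemma~\ref{lem:east} I would extract the refinement that for non-ancestor-related $u, v$ with $u$ left of $v$ and $a = \operatorname{LCA}(u, v)$, one has $l_E(u) < l_E(v)$ iff $u$ is a child of $a$, and $l_E(u) > l_E(v)$ iff $u$ is a strict descendant of $a$'s child containing $u$; ancestors are always labeled before descendants. Applied to a candidate $213$-pattern $l_E(v) < l_E(u) < l_E(w)$, the pairs $(u, w)$ and $(v, w)$, whose postorder orderings rule out $w$ being an ancestor of $u$ or $v$, would force $u$ and $v$ to each be children of their respective LCAs with $w$, and both to be left of $w$. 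The pair $(u, v)$ would then split into two subcases: either $v$ is a proper ancestor of $u$, or $u$ is left of $v$ with $u$ a strict descendant of $\operatorname{LCA}(u, v)$'s child containing $u$. In the first subcase, $w$ lies in the subtree of $u$'s parent $= \operatorname{LCA}(u, w)$, which sits inside $v$'s subtree, making $v$ an ancestor of $w$ and contradicting $v$ left of $w$. In the second subcase, the same analysis would place $w$ in the subtree rooted at $\operatorname{LCA}(u, v)$'s child containing $u$, a left sibling of the subtree containing $v$, placing $w$ left of $v$ and again contradicting $v$ left of $w$.
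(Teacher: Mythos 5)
Your proof is correct and follows essentially the same route as the paper: both parts argue by contradiction, using Lemmas~\ref{lem:east} and~\ref{lem:west} to translate label comparisons into positional (left-of versus ancestor) constraints; your part (2) is virtually identical to the paper's, and your part (1) merely repackages Lemma~\ref{lem:east} into an LCA criterion before running what amounts to the same case analysis. One small correction: it is not the postorder ordering that rules out $w$ being an ancestor of $u$ or $v$ --- postorder puts descendants before ancestors, so $u$ preceding $w$ is perfectly consistent with $w$ being an ancestor of $u$; rather, $w$ being an ancestor of $u$ would force $l_E(w)<l_E(u)$ (ancestors are always labelled first), contradicting $l_E(u)<l_E(w)$, while the postorder ordering is what rules out $u$ being an ancestor of $w$. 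Both facts are needed to conclude that $u$ and $w$ are incomparable and hence that $u$ is left of $w$; since both are available from your hypotheses, this is a misattribution rather than a gap.
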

\begin{proof}
For part (1), suppose for a contradiction that $i<j<k$ and the postorder traversal of $T$ visits the vertices labelled in $\east(T)$ by $j,i,k$ in that order. Since $k>i,j$ and $k$ is visited last among $i,j,k$ in the postorder traversal while $\east(T)$ is an increasing tree, $k$ must be right of $i$ and $j$. Since $i,j$ are labelled before $k$ in the eastpush-labelling of $T$, $i$ and $j$ must then be children of ancestors of $k$ by Lemma~\ref{lem:east}, but are not ancestors themselves. Thus neither $i$ nor $j$ is a descendant of the other, and so $j$ is left of $i$.

However, $i$ cannot be right of $j$'s parent, because $i$ is left of $k$ and $j$'s parent is an ancestor of $k$. $i$ cannot be a child of a non-parent ancestor of $j$, because then since $i$ is left of $k$, the child of the ancestor that is the root of the subtree containing $j$ must be right of $i$, and so $j$ would be right of $i$. And $i$ is not a left sibling of $j$. This contradicts Lemma~\ref{lem:east}.

For part (2), suppose for a contradiction that $i<j<k$ and the postorder traversal of $T$ visits the vertices labelled in $\west(T)$ by $k,i,j$, in that order. By Lemma~\ref{lem:west}, this requires that $i$ must be left of $j$, and $i,j$ are ancestors of $k$. This is already a contradiction, since $i$ must then either be an ancestor or a descendant of $j$.
\end{proof}
Because a class of numbers known as the Catalan numbers enumerate $213$- and $312$-avoiding permutations \cite[Section 2.2.1]{Knuth1}, as well as plane trees \cite{Cayley}, we have the following.
\begin{theorem}\label{thm:bij}
    The map $\rho\circ\gamma: S_1\times S_{n-1}\to \pl(n)$ restricts to
    \begin{enumerate}
        \item a bijection from $213$-avoiding permutations to plane trees, with inverse $\gamma^{-1}\circ\east$, and
        \item a bijection from $312$-avoiding permutations to plane trees, with inverse $\gamma^{-1}\circ\west$.
    \end{enumerate}
\end{theorem}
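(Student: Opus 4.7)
The plan is to obtain both bijections via a cardinality argument, using the left-inverse identities $\rho\circ\east=\rho\circ\west=\text{id}_{\pl(n)}$ to identify the explicit inverses. The preceding proposition already shows that $\gamma^{-1}\circ\east$ and $\gamma^{-1}\circ\west$ carry plane trees into 213- and 312-avoiding permutations respectively, so what remains is bijectivity together with the formula for the inverse. The crucial observation is that for any $T\in\pl(n)$, in both $\east(T)$ and $\west(T)$ the children of every vertex receive labels that increase from left to right: for $\east$, this is part of Lemma~\ref{lem:east} (all left siblings of a vertex are labelled before it), and for $\west$ it follows from Lemma~\ref{lem:west} (left siblings lie to the left of a given vertex in the extended sense, so are labelled first). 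Since $\rho$ reorders the children of each vertex by increasing label, applying $\rho$ to $\east(T)$ or $\west(T)$ simply reproduces the original plane structure; hence $\rho\circ\east=\rho\circ\west=\text{id}_{\pl(n)}$, and in particular $\east$ and $\west$ are injective.

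Composing with the bijection $\gamma^{-1}$ of Lemma~\ref{lem:inc}, the maps $\gamma^{-1}\circ\east$ and $\gamma^{-1}\circ\west$ from $\pl(n)$ into $S_1\times S_{n-1}$ are injective with images inside the 213- and 312-avoiding permutations respectively. I would then invoke the Catalan enumeration: $|\pl(n)|=C_{n-1}$, and the 213- and 312-avoiding permutations in $S_1\times S_{n-1}$ are each also counted by $C_{n-1}$ (the fixed value $1$ in the leftmost position cannot play any role in a 213 or 312 pattern, since it is both smallest and leftmost, so restriction to positions $2,\dots,n$ gives the standard pattern-avoiding classes in $S_{n-1}$). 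An injective map between two finite sets of equal size is a bijection, so both maps are bijections.

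Finally, the left-inverse identity gives $(\rho\circ\gamma)\circ(\gamma^{-1}\circ\east)=\rho\circ\east=\text{id}_{\pl(n)}$, identifying $\rho\circ\gamma$ restricted to 213-avoiding permutations as the inverse of $\gamma^{-1}\circ\east$ and yielding part (1); the same chain of equalities with $\west$ in place of $\east$ yields part (2). The only real obstacle is verifying $\rho\circ\east=\rho\circ\west=\text{id}$, which once parsed carefully through Lemmas~\ref{lem:east} and \ref{lem:west} reduces to the statement that children of each vertex receive left-to-right increasing labels in both stack-based algorithms.
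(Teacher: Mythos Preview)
Your proposal is correct and follows essentially the same route as the paper. The paper's proof is a one-line appeal to Catalan enumeration, relying on the paragraph preceding the Proposition where it is observed (via Lemmas~\ref{lem:east} and~\ref{lem:west}) that $\east(T)$ and $\west(T)$ have labels increasing left to right among siblings, i.e.\ $\rho\circ\east=\rho\circ\west=\text{id}_{\pl(n)}$; you have simply made this implicit step explicit and spelled out the resulting injectivity and cardinality argument.
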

\begin{example}
Drawn are the eastpush-labelling on the left and the westpop-labelling on the right of the same plane tree. The left is $\gamma$ applied to the $213$-avoiding permutation $1723564$ and the right is $\gamma$ applied to the $312$-avoiding permutation $1324675$.
\begin{center}
\begin{tikzpicture}
\coordinate (1) at (0,0);
\coordinate (2) at (-.5,-.5);
\coordinate (3) at (-.5,-1);
\coordinate (4) at (0,-.5);
\coordinate (5) at (.5,-.5);
\coordinate (6) at (.25,-1);
\coordinate (7) at (.75,-1);

\filldraw [black] (1) circle (2pt);
\filldraw [black] (2) circle (2pt);
\filldraw [black] (3) circle (2pt);
\filldraw [black] (4) circle (2pt);
\filldraw [black] (5) circle (2pt);
\filldraw [black] (6) circle (2pt);
\filldraw [black] (7) circle (2pt);
\draw[-] (1) to (2) to (3);
\draw[-] (1) to (4);
\draw[-] (5) to (7);
\draw[-] (1) to (5) to (6);
\node[] at (-0.25,0) { $1$};
\node[] at (-0.75,-.5) { $2$};
\node[] at (0.75,-.5) { $4$};
\node[] at (-0.75,-1) { $7$};
\node[] at (-0.25,-.5) { $3$};
\node[] at (0,-1) { $5$};
\node[] at (1,-1) { $6$};

\coordinate (1) at (0+4,0);
\coordinate (2) at (-.5+4,-.5);
\coordinate (3) at (-.5+4,-1);
\coordinate (4) at (0+4,-.5);
\coordinate (5) at (.5+4,-.5);
\coordinate (6) at (.25+4,-1);
\coordinate (7) at (.75+4,-1);

\filldraw [black] (1) circle (2pt);
\filldraw [black] (2) circle (2pt);
\filldraw [black] (3) circle (2pt);
\filldraw [black] (4) circle (2pt);
\filldraw [black] (5) circle (2pt);
\filldraw [black] (6) circle (2pt);
\filldraw [black] (7) circle (2pt);
\draw[-] (1) to (2) to (3);
\draw[-] (1) to (4);
\draw[-] (5) to (7);
\draw[-] (1) to (5) to (6);
\node[] at (-0.25+4,0) { $1$};
\node[] at (-0.75+4,-.5) { $2$};
\node[] at (0.75+4,-.5) { $5$};
\node[] at (-0.75+4,-1) { $3$};
\node[] at (-0.25+4,-.5) { $4$};
\node[] at (0+4,-1) { $6$};
\node[] at (1+4,-1) { $7$};
\end{tikzpicture}
\end{center}
\end{example}



Next, we study how $213$- and $312$-avoiding permutations appear in the fibers of $\rho\circ\gamma$. For $\alpha\in S_1\times S_{n-1}$, the \textit{first inversion function} of $\alpha$ is the function $t_\alpha:\{2,\dots,n+1\}\to\{2,\dots,n+1\}$ defined by $t(n+1)=t(n+1)$, and for $2\le i \le n$, $t(i)=j$ if $(i,j)$ is the first inversion from position $i$ and $t(i)=n+1$ if there are no inversions from position $i$.

\begin{lemma}\label{lem:213}
For $213$-avoiding $\alpha\in S_1\times S_{n-1}$, $\inv(\alpha)=\{(i,j)\mid t_\alpha(i)\le j \le n\}$.
\end{lemma}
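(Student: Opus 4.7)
My plan is to prove the two inclusions separately, noting that one direction is immediate from the definition of $t_\alpha$, while the other is the only place where $213$-avoidance is used.

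For the inclusion $\inv(\alpha)\supseteq\{(i,j)\mid t_\alpha(i)\le j\le n\}$: suppose $(i,j)$ is an inversion, so $i<j\le n$ and $\alpha(i)>\alpha(j)$. By the definition of the first inversion from position $i$, $t_\alpha(i)$ is the minimum such $j$, so $t_\alpha(i)\le j\le n$. No hypothesis on patterns is used here.

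For the reverse inclusion $\inv(\alpha)\subseteq\{(i,j)\mid t_\alpha(i)\le j\le n\}$ the point is that once $\alpha(i)$ has a descent at position $k=t_\alpha(i)$, it cannot ``recover" to a larger value at any later position. Fix $i$ with $k:=t_\alpha(i)\le n$; we must show $\alpha(i)>\alpha(j)$ for every $j$ with $k\le j\le n$. The case $j=k$ is the definition of $t_\alpha$. For $j>k$, I would argue by contradiction: if $\alpha(j)>\alpha(i)$, then the positions $i<k<j$ carry values $\alpha(i),\alpha(k),\alpha(j)$ satisfying $\alpha(k)<\alpha(i)<\alpha(j)$, which is exactly the relative order of $2,1,3$ and hence a $213$ pattern, contradicting our hypothesis on $\alpha$.

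There is nothing delicate beyond checking definitions, so I do not expect any substantial obstacle. The only thing to watch is the edge case $t_\alpha(i)=n+1$, where both sides trivially contribute nothing involving position $i$, which I would dispatch in one sentence at the start.
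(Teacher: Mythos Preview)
Your argument is essentially identical to the paper's, but you have the two inclusion labels swapped. In your first paragraph you write ``For the inclusion $\inv(\alpha)\supseteq\{\ldots\}$'' and then assume $(i,j)$ is an inversion and deduce $t_\alpha(i)\le j\le n$; that is a proof of $\inv(\alpha)\subseteq\{\ldots\}$, not $\supseteq$. Likewise your second paragraph, labelled $\subseteq$, fixes $i$ with $t_\alpha(i)\le n$ and shows every $(i,j)$ with $t_\alpha(i)\le j\le n$ is an inversion, which is the $\supseteq$ direction. Once the labels are corrected, the content matches the paper's proof exactly: the trivial direction uses only minimality of $t_\alpha(i)$, and the substantive direction observes that $\alpha(j)>\alpha(i)>\alpha(t_\alpha(i))$ at positions $i<t_\alpha(i)<j$ would be a $213$ pattern.
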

\begin{proof}
    If $t_\alpha(i)\le n$, then $(i,t_\alpha(i))\in\inv(\alpha)$ implies we further have $(i,j)\in \inv(\alpha)$ for all $t_\alpha(i)< j \le n$, or else we have a $213$-pattern in positions $i,t_\alpha(i),j$. By definition of $t_\alpha(i)$, there are no other inversions in $\alpha$.
\end{proof}

\begin{lemma}\label{lem:312}
For $312$-avoiding $\alpha\in S_1\times S_{n-1}$, $\inv(\alpha)=\{(i,j)\mid \{t^k_\alpha(i)\}_{k\ge 1}\ni j \le n\}$.
\end{lemma}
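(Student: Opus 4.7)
The plan is to prove both containments, with the forward direction holding for arbitrary $\alpha$ and the reverse containment relying crucially on $312$-avoidance.

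For the forward direction, I would show by induction on $k\ge 1$ that whenever $t^k_\alpha(i)\le n$, the pair $(i,t^k_\alpha(i))$ is an inversion of $\alpha$, with moreover $\alpha(t^k_\alpha(i))<\alpha(i)$. The base case $k=1$ is immediate from the definition of $t_\alpha$. For the inductive step, write $j=t^{k-1}_\alpha(i)$; the hypothesis $t^k_\alpha(i)\le n$ forces $j\le n$, so by induction $i<j$ and $\alpha(j)<\alpha(i)$. Since $t^k_\alpha(i)=t_\alpha(j)$ is the first inversion position from $j$, we have $t_\alpha(j)>j>i$ and $\alpha(t_\alpha(j))<\alpha(j)<\alpha(i)$, producing the desired inversion. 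This direction does not use $312$-avoidance.

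For the reverse direction, let $(i,j)\in\inv(\alpha)$, and set $i_0=i$, $i_{r+1}=t_\alpha(i_r)$. The iterates are strictly increasing as long as they remain $\le n$, so the sequence is finite. I would prove by induction on $r$ that as long as $i_r\le j$, we have $(i_r,j)\in\inv(\alpha)$, which in particular forces $i_{r+1}\le j$. The base case $r=0$ is the hypothesis $(i,j)\in\inv(\alpha)$. For the inductive step, assume $(i_r,j)\in\inv(\alpha)$ and $i_{r+1}\le n$. By definition $i_{r+1}=t_\alpha(i_r)\le j$, and if $i_{r+1}=j$ we are done. Otherwise $i_r<i_{r+1}<j$ with $\alpha(i_r)>\alpha(i_{r+1})$ and $\alpha(i_r)>\alpha(j)$; if it were the case that $\alpha(i_{r+1})<\alpha(j)$, then the values at positions $i_r,i_{r+1},j$ would realize a $312$-pattern, contradicting $312$-avoidance. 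Hence $\alpha(i_{r+1})>\alpha(j)$, so $(i_{r+1},j)\in\inv(\alpha)$, completing the induction. Since the $i_r$ are strictly increasing and bounded by $j$, some $i_r$ must equal $j$, exhibiting $j$ in the orbit of $i$ under $t_\alpha$.

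The main obstacle is the reverse direction: specifically, isolating the precise $312$-pattern argument that lets one pass from an inversion $(i_r,j)$ to an inversion $(i_{r+1},j)$. Once this single observation is in hand the induction is straightforward, and the forward direction needs only the definition of $t_\alpha$ and a routine induction.
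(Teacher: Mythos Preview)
Your proof is correct and follows essentially the same approach as the paper. The key step in both is the observation that from an inversion $(i_r,j)$ one obtains an inversion $(t_\alpha(i_r),j)$, since otherwise positions $i_r,\,t_\alpha(i_r),\,j$ would exhibit a $312$-pattern; the paper packages this as a short proof by contradiction, while you run it as a direct induction, but the content is identical. You are slightly more explicit than the paper in verifying the easy containment (that iterates of $t_\alpha$ always produce inversions), which the paper dispatches in a single phrase.
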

\begin{proof}
    The set $\{(i,j)\mid \{t^k_\alpha(i)\}_{k\ge 1}\ni j \le n\}$ consists of all inversions of $\alpha$ that are guaranteed by the first inversion function of $\alpha$. Suppose for a contradiction that these were not all, that there were some other inversion $(i,j)$ with $j\not\in \{t_\alpha^k(i)\}_{k\ge 1}$. Then $(t_\alpha(i),j)\in \inv(G)$, or else there is a $312$-pattern in positions $i,t_{\alpha(i)},j$. Repeating this argument, we find that there is some iterate $t_\alpha^k(i)$ such that $(t_\alpha^k(i),j)\in \inv(\alpha)$, but $j<t_\alpha^{k+1}(i)$, contradicting that $t_\alpha$ keeps track of first inversions.
\end{proof}

\begin{proposition} \label{prop:int}
    For $T\in\pl(n)$, $\gamma^{-1}(\rho^{-1}(T))$ is an interval in the weak order on $S_1\times S_{n-1}$, with top element $213$-avoiding and bottom element $312$-avoiding.
\end{proposition}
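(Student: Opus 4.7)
The plan is to identify the fiber $F := \gamma^{-1}(\rho^{-1}(T))$ with the weak order interval between the unique $312$-avoiding permutation $\alpha_{312}\in F$ and the unique $213$-avoiding permutation $\alpha_{213}\in F$, whose existence is guaranteed by Theorem~\ref{thm:bij}. Label the vertices of $T$ as $u_1,\dots,u_n$ with $u_1$ the root and $u_2,\dots,u_n$ enumerating the non-root vertices in postorder. By Lemma~\ref{lem:inc}, each $\alpha\in F$ corresponds to a bijection $\ell:\{u_1,\dots,u_n\}\to[n]$, $\ell(u_i)=\alpha(i)$, that is an increasing labeling of $T$ whose siblings are labeled increasingly from left to right. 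The first inversion function $t_\alpha$ of every such $\alpha$ is thus the same map $t$, sending $p\ge 2$ to the $\alpha$-position of the parent of $u_p$ (or to $n+1$ if that parent is the root).

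I first show $\alpha_{312}\le\alpha\le\alpha_{213}$ in weak order for every $\alpha\in F$. By Lemma~\ref{lem:312}, $\inv(\alpha_{312})$ is exactly the set of descendant-to-non-root-ancestor position pairs in $T$, and these are inversions of any $\alpha\in F$ since descendants receive strictly larger labels than their ancestors. By Lemma~\ref{lem:213}, $\inv(\alpha_{213})=\{(p,q):t(p)\le q\le n\}$, so the upper containment reduces to showing $(p,q)\notin\inv(\alpha)$ whenever $p<q<t(p)$. In that range, $u_q$ lies postorder-after $u_p$ but postorder-before the parent of $u_p$ (or simply postorder-after $u_p$ when the parent is the root), so $u_q$ is contained in the subtree rooted at some right sibling $s$ of $u_p$; the sibling-increasing condition then yields $\ell(u_p)<\ell(s)\le\ell(u_q)$, i.e., $(p,q)\notin\inv(\alpha)$.

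For the reverse inclusion, suppose $\alpha\in[\alpha_{312},\alpha_{213}]$ in weak order. Defining $\ell(u_i):=\alpha(i)$, I verify that $\ell$ is an increasing labeling of $T$ with siblings labeled increasingly from left to right, which by Lemma~\ref{lem:inc} will imply $\alpha\in F$. Each parent-child position pair $(p,t(p))$ with $t(p)\le n$ lies in $\inv(\alpha_{312})\subseteq\inv(\alpha)$, so $\alpha(p)>\alpha(t(p))$; the case where the parent of $u_p$ is the root reduces to $\alpha(p)>1=\alpha(1)$, which is automatic. If $c_a,c_{a+1}$ are consecutive children of some vertex $w$ with $\alpha$-positions $p<q$, then $q$ is strictly less than the $\alpha$-position of $w$, so $q<t(p)$; by Lemma~\ref{lem:213}, $(p,q)\notin\inv(\alpha_{213})\supseteq\inv(\alpha)$, giving $\ell(c_a)=\alpha(p)<\alpha(q)=\ell(c_{a+1})$. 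The main obstacle is simply maintaining consistency between three related indexings (postorder positions, $\alpha$-positions, and iterates of $t$); once set up, both containments follow from short inversion-set comparisons using Lemmas~\ref{lem:213} and~\ref{lem:312}.
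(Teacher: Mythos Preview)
Your proof is correct and follows essentially the same approach as the paper: both identify that the first inversion function $t_\alpha$ is constant on the fiber, then invoke Theorem~\ref{thm:bij} together with Lemmas~\ref{lem:213} and~\ref{lem:312} to describe the inversion sets of the extremal $213$- and $312$-avoiding permutations and sandwich any $\alpha\in F$ between them. Your version is in fact more complete than the paper's: the paper only argues the containment $F\subseteq[\alpha_{312},\alpha_{213}]$ (via ``all possible'' versus ``all guaranteed'' inversions), whereas you explicitly verify the reverse inclusion by checking that any $\alpha$ in the interval induces an increasing, sibling-increasing labeling of $T$ and hence lies in $F$.
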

\begin{proof}
For all $\alpha\in \gamma^{-1}(\rho^{-1}(T))$, $t_\alpha(i)=j$ for $2\le i\le n$ if the $(i-1)$th vertex in the postorder traversal of $T$ is the $(j-1)$th vertex in the traversal. By Theorem~\ref{thm:bij}, there is exactly one $213$-avoiding permutation and one $312$-avoiding permutation in the fiber of $\rho\circ\gamma$ containing $\alpha$. By Lemmas~\ref{lem:213} and \ref{lem:312}, their inversion sets are $\{(i,j)\mid t_\alpha(i)\le j\le n\}$ and $\{(i,j)\mid \{t_\alpha^k(i)\}_{k\ge1}\ni j \le n\}$, respectively. These are, respectively, all possible inversions given first inversion function $t_\alpha$, and all guaranteed inversions given first inversion function $t_\alpha$. Hence $\alpha$ lies between them in the weak order in $S_1\times S_{n-1}$.
\end{proof}

We call the equivalence relation induced on the fibers of a lattice quotient a \textit{lattice congruence}.

\begin{proposition}\cite[Section 3]{Reading}\label{prop:cong}
    An equivalence relation on a lattice $L$ is a lattice congruence if and only if all of the following hold:
    \begin{enumerate}
        \item each equivalence class is an interval,
        \item the map $\pi^\uparrow$ from $L$ sending an element to the top of its equivalence class is order-preserving, and
        \item the map $\pi_\downarrow$ from $L$ sending an element to the bottom of its equivalence class is order-preserving.
    \end{enumerate}
\end{proposition}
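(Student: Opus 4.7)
The plan is to prove both directions of the biconditional separately, with the projections $\pi^\uparrow$ and $\pi_\downarrow$ (whose well-definedness presupposes (1)) acting as the main bridge between the combinatorial structure of the equivalence classes and the algebraic structure of the quotient lattice.

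For the forward direction, suppose $\sim$ arises as the fiber relation of a lattice quotient $\phi\colon L\to L'$. To prove (1), I would first show each fiber is convex: if $x\le z\le y$ and $\phi(x)=\phi(y)$, then $\phi(x)\le\phi(z)\le\phi(y)=\phi(x)$ by order-preservation. Using that $\phi$ preserves joins and meets, each fiber is also closed under $\vee$ and $\wedge$, so in a finite lattice it has a maximum and minimum and is thus an interval. For (2), given $x\le y$, observe that $\phi(\pi^\uparrow(x)\vee\pi^\uparrow(y))=\phi(x)\vee\phi(y)=\phi(y)$, so $\pi^\uparrow(x)\vee\pi^\uparrow(y)$ lies in the same fiber as $\pi^\uparrow(y)$ and hence is at most the top element $\pi^\uparrow(y)$ of that fiber, forcing $\pi^\uparrow(x)\le\pi^\uparrow(y)$. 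Item (3) is dual.

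For the backward direction, assume (1), (2), (3). The key lemma is that $x\sim y$ iff $\pi^\uparrow(x)=\pi^\uparrow(y)$ iff $\pi_\downarrow(x)=\pi_\downarrow(y)$; this is immediate from (1), since a (finite) interval determines and is determined by its endpoints. Next I would prove the push-through identity $\pi^\uparrow(x\vee y)=\pi^\uparrow(\pi^\uparrow(x)\vee\pi^\uparrow(y))$: the inequality $\le$ follows from $x\vee y\le\pi^\uparrow(x)\vee\pi^\uparrow(y)$ together with the monotonicity in (2), while $\ge$ follows from $\pi^\uparrow(x),\pi^\uparrow(y)\le\pi^\uparrow(x\vee y)$ (again by (2)) and then applying $\pi^\uparrow$ once more, using the idempotence $\pi^\uparrow\circ\pi^\uparrow=\pi^\uparrow$. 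Dually, $\pi_\downarrow(x\wedge y)=\pi_\downarrow(\pi_\downarrow(x)\wedge\pi_\downarrow(y))$ via (3). These identities give the congruence property: if $x\sim y$ and $x'\sim y'$, then
\[
\pi^\uparrow(x\vee x')=\pi^\uparrow(\pi^\uparrow(x)\vee\pi^\uparrow(x'))=\pi^\uparrow(\pi^\uparrow(y)\vee\pi^\uparrow(y'))=\pi^\uparrow(y\vee y')
\]
by the key lemma, so $x\vee x'\sim y\vee y'$, and analogously for meets using $\pi_\downarrow$. This lets one define $\vee$ and $\wedge$ on $L/\sim$ unambiguously by representatives, making the natural projection $L\to L/\sim$ a lattice quotient.

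The step I expect to be the main obstacle is the push-through identity in the backward direction. Combining the two inequalities cleanly requires the idempotence of $\pi^\uparrow$ on its image, and one has to be careful not to conflate equivalence of elements with equality of projections before the key lemma is in place. The finite-lattice hypothesis, implicit in the weak-order setting of this paper, is also silently used to extract the top and bottom of each fiber in step (1) of the forward direction; in a more general setting one would need to assume the lattice is complete or that the equivalence classes have extrema.
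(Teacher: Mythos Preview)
The paper does not give its own proof of this proposition: it is stated with a citation to \cite[Section 3]{Reading} and used as a black box in the proof of the subsequent theorem. So there is no in-paper argument to compare against; your write-up stands on its own as a proof of the cited fact.

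Your argument is correct and standard. A couple of small points worth tightening. First, in the forward direction you silently use finiteness to conclude that a nonempty subset closed under $\vee$ and $\wedge$ has a top and bottom; you flag this at the end, and indeed in the generality of the paper (finite lattices like the weak order on $S_1\times S_{n-1}$) it is harmless, but it is not part of the statement as written. Second, in the backward direction you should also say a word about why the induced order on $L/{\sim}$ makes the projection order-preserving and surjective, since the paper's definition of ``lattice quotient'' explicitly lists order-preservation in addition to join- and meet-preservation; this is immediate (if $x\le y$ then $x\vee y=y$, hence $[x]\vee[y]=[y]$), but worth one sentence. The push-through identity and the idempotence of $\pi^\uparrow$ are handled cleanly and are exactly the right core of the argument.
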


\begin{theorem}
    The map $\rho\circ \gamma: S_1\times S_{n-1}\to \pl(n)$ induces the Tamari lattice structure on $\pl(n)$ respecting the lattice structure of the weak order on $S_1\times S_{n-1}$, making $\rho\circ\gamma$ a lattice quotient.
\end{theorem}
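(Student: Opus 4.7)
The plan is to apply Proposition~\ref{prop:cong} with lattice $S_1\times S_{n-1}$ under the weak order and equivalence classes given by the fibers of $\rho\circ\gamma$. Condition~(1) is Proposition~\ref{prop:int}, so it remains to verify that $\pi^\uparrow$ and $\pi_\downarrow$ are order-preserving. Since the weak order is the transitive closure of its covering relations, it suffices to check this on covers $\alpha\lessdot\beta$. Such a cover has the form $\beta=s_k\alpha$ for some $k\ge 2$; writing $p=\alpha^{-1}(k)$ and $q=\alpha^{-1}(k+1)$ with $p<q$, a direct case analysis of pairs of positions gives $\inv(\beta)=\inv(\alpha)\cup\{(p,q)\}$, hence $t_\beta(i)=t_\alpha(i)$ for $i\ne p$ and $t_\beta(p)=\min(t_\alpha(p),q)$.

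If $t_\alpha(p)<q$, then $t_\beta=t_\alpha$, the fibers of $\alpha$ and $\beta$ coincide, and both conditions hold trivially. Otherwise $t_\alpha(p)>q$, and $t_\beta\le t_\alpha$ pointwise. Lemma~\ref{lem:213} gives $\inv(\pi^\uparrow(\alpha))=\{(i,j)\colon t_\alpha(i)\le j\le n\}$ and the analogous formula for $\beta$, so the pointwise inequality immediately yields $\inv(\pi^\uparrow(\alpha))\subseteq\inv(\pi^\uparrow(\beta))$, verifying condition~(2).

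For condition~(3), by Lemma~\ref{lem:312} I must show that the $t_\alpha$-orbit of each $i$ is contained in the $t_\beta$-orbit of $i$ (intersected with $[2,n]$). The two orbits agree until they first hit $p$, at which point the $\alpha$-orbit continues to $t_\alpha(p)$ while the $\beta$-orbit continues to $q$. The decisive identity is $t_\alpha(q)=t_\alpha(p)$: the hypothesis $t_\alpha(p)>q$ says every position in $(p,t_\alpha(p))$ has $\alpha$-value at least $k$, and since $k,k+1$ occur only at $p,q$, every such position other than $q$ has $\alpha$-value at least $k+2$; therefore the first position after $q$ with $\alpha$-value at most $k$ is $t_\alpha(p)$. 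Consequently the $\beta$-orbit past $p$ reads $q,t_\alpha(p),t_\alpha^2(p),\dots$, which contains the $\alpha$-orbit past $p$ with only the extra element $q$ inserted, and condition~(3) follows.

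Proposition~\ref{prop:cong} now yields that $\rho\circ\gamma$ is a lattice quotient for the induced lattice structure on $\pl(n)$. To identify this quotient with the Tamari lattice, I would use that $\pi_\downarrow$ selects the unique $312$-avoiding representative of each fiber (Proposition~\ref{prop:int}), so the quotient is order-isomorphic via $\gamma^{-1}\circ\west$ (Theorem~\ref{thm:bij}) to the weak-order subposet on $312$-avoiding permutations in $S_1\times S_{n-1}$, which is the Tamari lattice by~\cite{Reading}. The main obstacle is the bookkeeping in the orbit analysis of paragraph three; the identity $t_\alpha(q)=t_\alpha(p)$ is the essential combinatorial observation that lets the ``insertion of $q$'' picture go through and makes condition~(3) routine.
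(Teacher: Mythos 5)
Your proposal is correct and follows the paper's proof in outline: verify the three conditions of Proposition~\ref{prop:cong}, using Proposition~\ref{prop:int} for condition (1) and the inversion-set descriptions of Lemmas~\ref{lem:213} and~\ref{lem:312} for conditions (2) and (3). The one place you genuinely diverge is condition (3): the paper argues directly for arbitrary $\alpha\le\beta$ that the $t_\beta$-orbit of $i$ cannot skip past $t_\alpha(i)$, since $t_\beta(j)\le t_\alpha(j)\le t_\alpha(i)$ for all $i\le j<t_\alpha(i)$ and the orbit is strictly increasing; you instead reduce to covers $\beta=s_k\alpha$ and track the orbit change explicitly through the identity $t_\alpha(q)=t_\alpha(p)$. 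Your identity is correct (under $t_\alpha(p)>q$ every position strictly between $p$ and $q$ carries a value at least $k+2$, so the first descent below $k+1$ after $q$ coincides with the first descent below $k$ after $p$), and the reduction to covers is legitimate since the weak order on $S_1\times S_{n-1}$ is generated by these covers; the paper's version is shorter and avoids the case analysis, while yours makes the local mechanism of the congruence (insertion of a single element $q$ into the orbit) completely explicit. Finally, you identify the quotient with the Tamari lattice through the $312$-avoiding bottom representatives via Reading, whereas the paper uses the $213$-avoiding top representatives via Bj\"orner--Wachs; both identifications are standard and either closes the argument.
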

\begin{proof}
    We will show that the fibers of $\rho\circ\gamma$ form a lattice congruence. That the lattice structure on plane trees is the Tamari lattice then follows from \cite[Section 9]{BW}, where they identify the lattice on $213$-avoiding permutations in the weak order with the Tamari lattice.

    Condition (1) of Proposition~\ref{prop:cong} is Proposition~\ref{prop:int}. For condition (2), let $\alpha\le\beta\in S_1\times S_{n-1}$ and let $\pi^\uparrow(\alpha),\pi^{\uparrow}(\beta)$ denote the greatest elements in the fibers of $\rho\circ\gamma$ containing $\alpha,\beta$, respectively. Since $\alpha\le\beta$ in the weak order, we see that each $t_\beta(i)\le t_\alpha(i)$. Therefore, $\{(i,j)\mid t_\alpha(i)\le j \le n\}\subseteq \{(i,j)\mid t_\beta(i)\le j \le n\}$, and so $\pi^\uparrow(\alpha)\le \pi^\uparrow(\beta)$ in the weak order. The proof of condition (3) is similar. (To see that $t_\alpha(i)$ lies in $\{t^k_\beta(i)\}_{k\ge 1}$, note for all $i\le j <t_\alpha(i)$ that $t_\beta(j)\le t_\alpha(j)\le t_\alpha(i).$)
\end{proof}

\begin{remark}
    In terms of first inversion functions, the join and meet operators of the Tamari lattice can deduced from $(t_{\alpha\vee\beta})(i)=\min(t_\alpha(i),t_\beta(i))$ and $(t_{\alpha\wedge \beta})(i)=\min (\{t_\alpha^k(i)\}_{k\ge 1}\cap \{t_\beta^k(i)\}_{k\ge 1})$. First inversion functions are shifts of the bracketing functions studied by Huang and Tamari in \cite{HuangTamari}, but the meet is not so easily described using bracketing functions, and so was omitted from their paper.
    
    The description of the Tamari lattice in terms of plane trees can be shown to be equivalent to Knuth's description in terms of ordered forests (plane trees with the root removed) in \cite[Section 7.2.1.6]{Knuth4A}. The description of the quotient from the weak order to the Tamari lattice described using plane trees via first inversions appears to be new.
\end{remark}
\section{Finite two-player games of perfect information}\label{sec:game}
In this section, we will connect the numbers $a_n$ to two-player games and use this connection to prove identities involving $a_n$. We begin by defining a polynomial $\varphi_T(q)$ associated to a rooted tree $T$.

\begin{definition}
    For a rooted tree $T$, $\varphi_T(q)$ is defined recursively by the following rule: if $T$ has children which are roots of the subtrees $T_1,\dots,T_m$, then
    $$\varphi_T(q)=\prod_{k=1}^m (1+q\varphi_{T_k}(q)).$$
\end{definition}
Here, we take $1$ to be the empty product.
\begin{remark}
    In general, $\varphi_T(q)$ does not distinguish rooted trees and is not unimodal. For the following rooted trees $T_1,T_2,T_3$, $\varphi_{T_1}(q)=\varphi_{T_2}(q)=1+2q+3q^2+4q^3+4q^4+3q^5+q^6$ and $\varphi_{T_3}(q)=1+3q+3q^2+6q^3+10q^4+11q^5+10q^6+11q^7+10q^8+5q^9+q^{10}$.
    \begin{center}
\begin{tikzpicture}
\coordinate (1) at (0,0);
\coordinate (2) at (-.25,-.5);
\coordinate (3) at (.25,-.5);
\coordinate (4) at (0, -1);
\coordinate (5) at (.5,-1);
\coordinate (6) at (.5,-1.5);
\coordinate (7) at (.5,-2);
\filldraw [black] (1) circle (2pt);
\filldraw [black] (2) circle (2pt);
\filldraw [black] (3) circle (2pt);
\filldraw [black] (4) circle (2pt);
\filldraw [black] (5) circle (2pt);
\filldraw [black] (6) circle (2pt);
\filldraw [black] (7) circle (2pt);
\draw[-] (1) to (2);
\draw[-] (1) to (3) to (4);
\draw[-] (3) to (5) to (6) to (7);
\node[] at (0,.5) { $T_1$};

\coordinate (1) at (0+3,0);
\coordinate (2) at (-.25+3,-.5);
\coordinate (3) at (-.25+3,-1);
\coordinate (4) at (.25+3, -.5);
\coordinate (5) at (.25+3,-1);
\coordinate (6) at (0+3,-1.5);
\coordinate (7) at (.5+3,-1.5);
\filldraw [black] (1) circle (2pt);
\filldraw [black] (2) circle (2pt);
\filldraw [black] (3) circle (2pt);
\filldraw [black] (4) circle (2pt);
\filldraw [black] (5) circle (2pt);
\filldraw [black] (6) circle (2pt);
\filldraw [black] (7) circle (2pt);
\draw[-] (1) to (2) to (3);
\draw[-] (1) to (4) to (5) to (6);
\draw[-] (5) to (7);
\node[] at (0+3,.5) { $T_2$};

\coordinate (1) at (6,0);
\coordinate (2) at (5.75,-.5);
\coordinate (3) at (5.5,-1);
\coordinate (4) at (5.25,-1.5);
\coordinate (5) at (5,-2);
\coordinate (6) at (5.5,-2);
\coordinate (7) at (6.25,-.5);
\coordinate (8) at (6.5,-1);
\coordinate (9) at (6,-1.5);
\coordinate (a) at (6.5,-1.5);
\coordinate (b) at (7,-1.5);
\filldraw [black] (1) circle (2pt);
\filldraw [black] (2) circle (2pt);
\filldraw [black] (3) circle (2pt);
\filldraw [black] (4) circle (2pt);
\filldraw [black] (5) circle (2pt);
\filldraw [black] (6) circle (2pt);
\filldraw [black] (7) circle (2pt);
\filldraw [black] (8) circle (2pt);
\filldraw [black] (9) circle (2pt);
\filldraw [black] (a) circle (2pt);
\filldraw [black] (b) circle (2pt);
\draw[-] (1) to (2) to (3) to (4) to (5);
\draw[-] (4) to (6);
\draw[-] (1) to (7) to (8) to (9);
\draw[-] (8) to (a);
\draw[-] (8) to (b);
\node[] at (6,.5) { $T_3$};
\end{tikzpicture}
\end{center}
\end{remark}
A rooted tree $T$ may be thought of as the \textit{game tree} of a finite two-player game of perfect information and alternating moves, where the children of each vertex denote the possible moves from that game state. We do not allow ties and say a player loses if they have no available moves. In this setting, Zermelo showed in \cite{Zermelo} that for every such game, either the first player or the second player has a winning strategy. We show that determining the player with the winning strategy is equivalent to evaluating $\varphi_T(-1)$.
\begin{lemma}\label{lem:win}
    For a rooted tree $T$, $\varphi_T(-1)=1$ if $T$ is the game tree of a finite two-player game of perfect information in which the second player has a winning strategy, otherwise $\varphi_T(q)=0$.
\end{lemma}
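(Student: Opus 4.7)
The plan is to prove the lemma by induction on the number of vertices of $T$, using the fact that evaluating the defining recursion at $q=-1$ gives
$$\varphi_T(-1)=\prod_{k=1}^m (1-\varphi_{T_k}(-1)),$$
where $T_1,\dots,T_m$ are the subtrees rooted at the children of the root of $T$. Because of the $(1-\cdot)$ on the right, the inductive hypothesis $\varphi_{T_k}(-1)\in\{0,1\}$ ensures $\varphi_T(-1)\in\{0,1\}$ as well, and the induction reduces to matching this Boolean recursion against the game-theoretic recursion for winning.

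For the base case, if $T$ is a single vertex, then $\varphi_T(-1)=1$ is the empty product, while the first player has no move available and so loses, confirming that the second player wins. For the inductive step, I will use the observation that a move from the root position swaps the roles of the two players: after the first player moves to the subtree $T_k$, the player who was second on $T$ becomes first on $T_k$. Therefore the first player wins on $T$ iff there is some $k$ for which the second player wins on $T_k$, and dually the second player wins on $T$ iff the first player wins on every $T_k$. (Zermelo's theorem, together with finiteness of $T$, guarantees that exactly one of these two outcomes occurs.)

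Combining the two recursions, under the inductive hypothesis the product $\prod_k(1-\varphi_{T_k}(-1))$ equals $1$ exactly when every $\varphi_{T_k}(-1)=0$, i.e.\ when the first player wins on every $T_k$, i.e.\ when the second player wins on $T$; and it equals $0$ exactly when some $\varphi_{T_k}(-1)=1$, i.e.\ when the second player wins on some $T_k$, i.e.\ when the first player wins on $T$. This closes the induction.

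There is no substantial obstacle here: the content of the argument is just that in the Boolean algebra $\{0,1\}$, the operation $x\mapsto 1-x$ is logical negation and the product is universal quantification, so the recursion for $\varphi_T(-1)$ already \emph{is} the standard fixed-point equation for second-player wins in finite games of perfect information. The only care needed is in the bookkeeping of whose turn it is when we recurse to a child subtree, which is handled by the role-swap observation above.
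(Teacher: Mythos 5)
Your proof is correct and follows essentially the same route as the paper: induction on the size of $T$, evaluating the defining recursion at $q=-1$ and matching the resulting Boolean recursion $\varphi_T(-1)=\prod_k(1-\varphi_{T_k}(-1))$ against the standard characterization that the root is a second-player win exactly when every child subtree is a first-player win. The only cosmetic difference is that you spell out the base case and the role-swap bookkeeping explicitly, which the paper leaves implicit via the empty product.
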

\begin{proof}
We proceed by induction on the size of the rooted tree. Suppose the inductive hypothesis holds for all rooted trees with fewer vertices than $T$. 

Suppose $T$ has children which are roots of the subtrees $T_1,\dots, T_m$. The root of $T$ is a losing position (and hence a player $2$ win) if and only if the roots of the $T_k$ are all winning positions. By the inductive hypothesis, this occurs if and only if each $\varphi_{T_k}(-1)=0$, and so
$$\varphi_T(-1)=\prod_{k=1}^m(1+(-1)\cdot 0)=1.$$

Otherwise, the root of some $T_k$ is a losing position, and so $(-\varphi_{T_k}(-1)+1)=0$ by the inductive hypothesis, implying $\varphi_T(-1)=0$.
\end{proof}

Next, we relate $\varphi_T(q)$ to the rank-generating functions encountered in Section~\ref{sec:inc}.

\begin{proposition}\label{prop:gamergf}
    For a rooted tree $T$, $\varphi_T(q)=\rgf_{L_T}(q)$.
\end{proposition}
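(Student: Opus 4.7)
The plan is to prove $\varphi_T(q) = \rgf_{L_T}(q)$ by induction on the number of vertices of $T$, using the isomorphism $L_T \cong J(\hat T)$ (order ideals of $\hat T$) already recorded in Section~\ref{sec:inc}. Via this isomorphism, the rank of a pruning corresponds to the cardinality of the associated order ideal (both count non-root vertices retained), so
\[
\rgf_{L_T}(q) = \sum_{I \in J(\hat T)} q^{|I|}.
\]
The base case, when $T$ consists only of the root, gives an empty poset $\hat T$ with a unique (empty) order ideal, matching the empty product $\varphi_T(q) = 1$.

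For the inductive step, suppose $T$ has subtrees $T_1,\dots,T_m$ rooted at the children $v_1,\dots,v_m$ of the root of $T$. I would partition the non-root vertex set of $T$ according to which $T_k$ a vertex belongs to, and observe that a set $I$ is an order ideal of $\hat T$ if and only if for each $k$ its restriction $I \cap V(T_k)$ is an order ideal of the subposet on $V(T_k)$. This subposet has $v_k$ as its unique minimum, so there are two cases: either $v_k \notin I$, forcing $I \cap V(T_k) = \emptyset$; or $v_k \in I$, in which case $I \cap V(T_k) = \{v_k\} \sqcup J$ for an arbitrary order ideal $J \in J(\hat T_k)$. Summing weighted by $q^{|\cdot|}$ independently over each $k$, the contribution from the $k$th subtree is
\[
1 + q \sum_{J \in J(\hat T_k)} q^{|J|} = 1 + q \cdot \rgf_{L_{T_k}}(q),
\]
and multiplying these contributions yields $\rgf_{L_T}(q) = \prod_{k=1}^m \bigl(1 + q \cdot \rgf_{L_{T_k}}(q)\bigr)$.

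Applying the inductive hypothesis $\rgf_{L_{T_k}}(q) = \varphi_{T_k}(q)$ to each factor and comparing with the defining recursion $\varphi_T(q) = \prod_{k=1}^m (1 + q \varphi_{T_k}(q))$ completes the proof. I do not expect a real obstacle here; the only delicate point is bookkeeping the role of the child vertex $v_k$, which is a vertex of $\hat T$ but is the root of $T_k$ and therefore not a vertex of $\hat T_k$, so the two cases "$v_k \in I$" versus "$v_k \notin I$" must be handled to correctly produce the factor $1 + q \cdot \rgf_{L_{T_k}}(q)$ rather than $\rgf_{L_{T_k}}(q)$ alone. Once that is straightened out, the inductive step matches the recursion for $\varphi_T(q)$ term by term. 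As an immediate consequence, combining this identity with Lemma~\ref{lem:win} recovers Corollary~\ref{cor:-1} from a game-theoretic viewpoint, which foreshadows the applications in the remainder of the section.
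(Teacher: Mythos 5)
Your proof is correct and takes essentially the same route as the paper: both decompose the prunings of $T$ subtree-by-subtree so that each child contributes a factor $1+q\,\rgf_{L_{T_k}}(q)$, and then compare with the defining recursion for $\varphi_T(q)$. The only cosmetic difference is that the paper states the decomposition as $L_T\cong L_{T_1'}\times\cdots\times L_{T_m'}$ with each $L_{T_k'}$ equal to $L_{T_k}$ with a new minimum adjoined, whereas you carry out the same bookkeeping through order ideals of $\hat T$.
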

    
\begin{proof}
    We will show that $\rgf_{L_T}(q)$ satisfies the same recursion defining $\varphi_T(q)$.

    Suppose $T$ has children which are roots of the subtrees $T_1,\dots, T_m$. Let $T_1',\dots, T_m'$ denote the rooted trees consisting of the root of $T$ connected to each of $T_1,\dots, T_m$, respectively. The data of a pruning of $T$ is exactly the data of a pruning of each of the $T_k'$, so $L_T$ is isomorphic to the Cartesian product $L_{T_1'}\times\cdots\times L_{T_m'}$. Each lattice $L_{T_k'}$ is exactly the lattice of $L_{T_k}$ with a new minimum element adjoined.

    Hence
    $$\rgf_{L_T}(q)=\prod_{k=1}^m \rgf_{L_{T_k'}}(q)=\prod_{k=1}^m (1+q\rgf_{L_{T_k}}(q)),$$
    as desired.
\end{proof}

Combining Corollary~\ref{cor:-1}, Lemma~\ref{lem:win}, and Proposition~\ref{prop:gamergf} gives the following.

\begin{theorem}\label{thm:pos}
    The integer $a_n$ counts the number of increasing trees on vertices labelled $1,\dots,n$ that are game trees in which the second player has a winning strategy. In particular, each $a_n$ is nonnegative.
\end{theorem}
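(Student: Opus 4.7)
The plan is to chain together the three results developed immediately before the theorem statement, namely Corollary~\ref{cor:-1}, Proposition~\ref{prop:gamergf}, and Lemma~\ref{lem:win}. No new combinatorial construction is needed at this stage; all of the work has already been done, and this theorem is essentially the payoff.

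First I would invoke Corollary~\ref{cor:-1} to write
$$a_n = \sum_{T\in\inc(n)} \rgf_{L_T}(-1).$$
Next, by Proposition~\ref{prop:gamergf}, the polynomial identity $\varphi_T(q)=\rgf_{L_T}(q)$ holds as an identity of polynomials in $q$, so in particular it remains valid at $q=-1$. Substituting,
$$a_n = \sum_{T\in\inc(n)} \varphi_T(-1).$$
Finally, Lemma~\ref{lem:win} identifies each term $\varphi_T(-1)$ as an indicator: it equals $1$ when $T$, viewed as a game tree, is won by the second player, and $0$ otherwise. Summing these indicators over $T\in\inc(n)$ yields exactly the number of increasing trees on vertex set $\{1,\dots,n\}$ whose game tree is a second-player win.

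The ``in particular'' clause is then immediate, since $a_n$ has been exhibited as the cardinality of a set of combinatorial objects and hence is a nonnegative integer. There is no real obstacle here; the only mild thing to be careful about is that Lemma~\ref{lem:win}, as stated, partitions rooted trees into P2-wins (value $1$) and P1-wins (value $0$) with no third category, so that the sum of $\varphi_T(-1)$ over $T\in\inc(n)$ really does count exactly the P2-win increasing trees and nothing else. Since every finite game of the type considered has a winner by Zermelo's theorem, already cited in the excerpt, this dichotomy is indeed exhaustive, and the proof is complete.
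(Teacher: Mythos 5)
Your proposal is correct and matches the paper exactly: the paper obtains Theorem~\ref{thm:pos} by combining Corollary~\ref{cor:-1}, Proposition~\ref{prop:gamergf}, and Lemma~\ref{lem:win} in precisely the way you describe. Your added remark about the win/loss dichotomy being exhaustive (via Zermelo) is a reasonable point of care but requires no further argument.
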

\begin{example}
$a_5=8$, from the following increasing trees. 
\begin{center}
\begin{tikzpicture}
\coordinate (1) at (0,0);
\coordinate (2) at (-.25,-.5);
\coordinate (3) at (-.25,-1);
\coordinate (4) at (.25, -.5);
\coordinate (5) at (.25,-1);
\filldraw [black] (1) circle (2pt);
\filldraw [black] (2) circle (2pt);
\filldraw [black] (3) circle (2pt);
\filldraw [black] (4) circle (2pt);
\filldraw [black] (5) circle (2pt);
\draw[-] (1) to (2) to (3);
\draw[-] (1) to (4) to (5);
\node[] at (-0.25,0) { $1$};
\node[] at (-0.5,-.5) { $2$};
\node[] at (-0.5,-1) { $3$};
\node[] at (.5,-.5) { $4$};
\node[] at (.5,-1) { $5$};
\coordinate (1) at (2,0);
\coordinate (2) at (2,-.5);
\coordinate (3) at (1.5,-1);
\coordinate (4) at (2, -1);
\coordinate (5) at (2.5,-1);
\filldraw [black] (1) circle (2pt);
\filldraw [black] (2) circle (2pt);
\filldraw [black] (3) circle (2pt);
\filldraw [black] (4) circle (2pt);
\filldraw [black] (5) circle (2pt);
\draw[-] (1) to (2) to (3);
\draw[-] (2) to (4);
\draw[-] (2) to (5);
\node[] at (1.75,0) { $1$};
\node[] at (1.75,-.5) { $2$};
\node[] at (1.25,-1) { $3$};
\node[] at (1.75,-1) { $4$};
\node[] at (2.75,-1) { $5$};
\coordinate (1) at (4,0);
\coordinate (2) at (4,-.5);
\coordinate (3) at (3.75,-1);
\coordinate (4) at (4.25, -1);
\coordinate (5) at (4.25,-1.5);
\filldraw [black] (1) circle (2pt);
\filldraw [black] (2) circle (2pt);
\filldraw [black] (3) circle (2pt);
\filldraw [black] (4) circle (2pt);
\filldraw [black] (5) circle (2pt);
\draw[-] (1) to (2) to (3);
\draw[-] (2) to (4) to (5);
\node[] (1) at (3.75,0) {$1$};
\node[] (2) at (3.75,-.5) {$2$};
\node[] (3) at (3.5,-1) {$3$};
\node[] (4) at (4.5, -1) {$4$};
\node[] (5) at (4.5,-1.5) {$5$};
\coordinate (1) at (0+6,0);
\coordinate (2) at (-.25+6,-.5);
\coordinate (3) at (-.25+6,-1);
\coordinate (4) at (.25+6, -.5);
\coordinate (5) at (.25+6,-1);
\filldraw [black] (1) circle (2pt);
\filldraw [black] (2) circle (2pt);
\filldraw [black] (3) circle (2pt);
\filldraw [black] (4) circle (2pt);
\filldraw [black] (5) circle (2pt);
\draw[-] (1) to (2) to (3);
\draw[-] (1) to (4) to (5);
\node[] at (-0.25+6,0) { $1$};
\node[] at (-0.5+6,-.5) { $2$};
\node[] at (-0.5+6,-1) { $4$};
\node[] at (.5+6,-.5) { $3$};
\node[] at (.5+6,-1) { $5$};
\coordinate (1) at (8,0);
\coordinate (2) at (8,-.5);
\coordinate (3) at (7.75,-1);
\coordinate (4) at (7.75, -1.5);
\coordinate (5) at (8.25,-1);
\filldraw [black] (1) circle (2pt);
\filldraw [black] (2) circle (2pt);
\filldraw [black] (3) circle (2pt);
\filldraw [black] (4) circle (2pt);
\filldraw [black] (5) circle (2pt);
\draw[-] (1) to (2) to (3) to (4);
\draw[-] (2) to (5);
\node[] (1) at (7.75,0) {$1$};
\node[] (2) at (7.75,-.5) {$2$};
\node[] (3) at (7.5,-1) {$3$};
\node[] (4) at (7.5, -1.5) {$4$};
\node[] (5) at (8.5,-1) {$5$};
\coordinate (1) at (0+10,0);
\coordinate (2) at (-.25+10,-.5);
\coordinate (3) at (-.25+10,-1);
\coordinate (4) at (.25+10, -.5);
\coordinate (5) at (.25+10,-1);
\filldraw [black] (1) circle (2pt);
\filldraw [black] (2) circle (2pt);
\filldraw [black] (3) circle (2pt);
\filldraw [black] (4) circle (2pt);
\filldraw [black] (5) circle (2pt);
\draw[-] (1) to (2) to (3);
\draw[-] (1) to (4) to (5);
\node[] at (-0.25+10,0) { $1$};
\node[] at (-0.5+10,-.5) { $2$};
\node[] at (-0.5+10,-1) { $5$};
\node[] at (.5+10,-.5) { $3$};
\node[] at (.5+10,-1) { $4$};
\coordinate (1) at (8+4,0);
\coordinate (2) at (8+4,-.5);
\coordinate (3) at (7.75+4,-1);
\coordinate (4) at (7.75+4, -1.5);
\coordinate (5) at (8.25+4,-1);
\filldraw [black] (1) circle (2pt);
\filldraw [black] (2) circle (2pt);
\filldraw [black] (3) circle (2pt);
\filldraw [black] (4) circle (2pt);
\filldraw [black] (5) circle (2pt);
\draw[-] (1) to (2) to (3) to (4);
\draw[-] (2) to (5);
\node[] (1) at (7.75+4,0) {$1$};
\node[] (2) at (7.75+4,-.5) {$2$};
\node[] (3) at (7.5+4,-1) {$3$};
\node[] (4) at (7.5+4, -1.5) {$5$};
\node[] (5) at (8.5+4,-1) {$4$};
\coordinate (1) at (14,0);
\coordinate (2) at (14,-.5);
\coordinate (3) at (14,-1);
\coordinate (4) at (14, -1.5);
\coordinate (5) at (14,-2);
\filldraw [black] (1) circle (2pt);
\filldraw [black] (2) circle (2pt);
\filldraw [black] (3) circle (2pt);
\filldraw [black] (4) circle (2pt);
\filldraw [black] (5) circle (2pt);
\draw[-] (1) to (2) to (3) to (4) to (5);
\node[] (1) at (13.75,0) {$1$};
\node[] (2) at (13.75,-.5) {$2$};
\node[] (3) at (13.75,-1) {$3$};
\node[] (4) at (13.75, -1.5) {$4$};
\node[] (5) at (13.75,-2) {$5$};
\end{tikzpicture}
\end{center}
\end{example}

Thus, we have solved the main problem studied in this paper. We next give a formula for $\varphi_T(q)$ in terms of the prunings of $T$ for which the second player has a winning strategy.

\begin{theorem}
    For a rooted tree $T$,
    $$\varphi_T(q)=\sum_{\substack{S\in L_T\\ \varphi_S(-1)=1}} (-q)^{r(S)}(1+q)^{c(S)},$$
    where for a pruning $S$ of $T$, $r(S)$ is the rank of $S$ in $L_T$ and $c(S)$ is the number of elements of $L_T$ that cover $S$.
\end{theorem}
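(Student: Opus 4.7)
The plan is to prove the theorem by induction on the number of vertices of $T$, using the recursive product structure $L_T\cong L_{T_1'}\times\cdots\times L_{T_m'}$ from the proof of Proposition~\ref{prop:gamergf}, where $T_1,\dots,T_m$ are the subtrees at the children of the root. It will be convenient to describe a pruning $S$ of $T$ as a tuple $(S_1,\dots,S_m)$ where each $S_k$ is either $\bot$ (meaning the edge from the root of $T$ to the root of $T_k$ is not included) or a pruning of $T_k$. Under this identification, $r(S)=\sum_k \tilde r(S_k)$ and $c(S)=\sum_k \tilde c(S_k)$, where $\tilde r(\bot)=0$, $\tilde r(S_k)=r(S_k)+1$ otherwise, and $\tilde c(\bot)=1$, $\tilde c(S_k)=c(S_k)$ otherwise. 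Moreover, the Zermelo analysis in Lemma~\ref{lem:win} shows that $\varphi_S(-1)=1$ iff every non-$\bot$ component $S_k$ satisfies $\varphi_{S_k}(-1)=0$.

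The key auxiliary step will be the identity
$$\Psi(T):=\sum_{S\in L_T}(-q)^{r(S)}(1+q)^{c(S)}=1$$
for every rooted tree $T$. I would prove this first, by induction: the product decomposition gives
$$\Psi(T)=\prod_{k=1}^m\Bigl[(1+q)+(-q)\Psi(T_k)\Bigr],$$
since the $\bot$ factor in the $k$-th coordinate contributes $(-q)^0(1+q)^1$ and the $S_k\in L_{T_k}$ factor contributes $(-q)\cdot(-q)^{r(S_k)}(1+q)^{c(S_k)}$. By induction each bracket equals $(1+q)-q=1$, and the base case of a single root is immediate.

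Step two is the main induction. Writing $\Phi(T)$ for the right-hand side of the theorem, the product decomposition together with the characterization of player-2 wins gives
$$\Phi(T)=\prod_{k=1}^m\left[(1+q)+(-q)\sum_{\substack{S_k\in L_{T_k}\\ \varphi_{S_k}(-1)=0}}(-q)^{r(S_k)}(1+q)^{c(S_k)}\right].$$
The inner sum equals $\Psi(T_k)-\Phi(T_k)=1-\varphi_{T_k}(q)$ by the auxiliary identity and the inductive hypothesis. Thus each bracketed factor simplifies to $(1+q)-q(1-\varphi_{T_k}(q))=1+q\varphi_{T_k}(q)$, and the product is $\varphi_T(q)$ by the recursive definition. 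The base case $T=\{\text{root}\}$ is trivial since the only pruning is the root, with $r=c=0$ and $\varphi_{\text{root}}(-1)=1$.

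The main obstacle will be identifying and proving the auxiliary identity $\Psi(T)=1$; without it the induction does not close, because the player-2 wins and player-1 wins on a subtree $T_k$ cannot be handled separately by the recursion for $\varphi_{T_k}(q)$ alone. Once $\Psi(T)=1$ is in hand, the bookkeeping of how $r$, $c$, and the player-2 condition distribute across the product decomposition is routine; one just needs to be careful that the $\bot$ element contributes $1$ to $c$ (since adjoining the root of $T_k$ is a single cover in $L_T$) and imposes no constraint from the player-2 condition.
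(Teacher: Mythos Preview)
Your proof is correct and takes a genuinely different route from the paper. The paper gives a probabilistic argument: for $q\in[-1,0]$ it interprets $-q$ as the probability of a biased coin landing heads, builds a random pruning of $T$ by recursively flipping this coin at each potential child, observes that the probability of obtaining a particular pruning $S$ is exactly $(-q)^{r(S)}(1+q)^{c(S)}$, and shows that the event ``the generated pruning is a player-2 win'' has probability $\varphi_T(q)$ by the defining recursion. Equality of the two polynomials is then deduced from their agreement on the interval $[-1,0]$ via polynomial interpolation.

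Your approach replaces this with a purely algebraic induction. The auxiliary identity $\Psi(T)=1$ that you isolate is precisely the statement that the ``probabilities'' $(-q)^{r(S)}(1+q)^{c(S)}$ sum to $1$ over all prunings---automatic in the paper's framework, but needing its own short inductive proof in yours. Once that is in hand, your induction closes cleanly using the same product decomposition $L_T\cong L_{T_1'}\times\cdots\times L_{T_m'}$ and the same game-theoretic characterization of player-2 wins. What the paper's argument buys is a conceptual explanation of why the formula has the shape it does (it is literally a probability computation); what your argument buys is that it avoids the detour through real parameters and polynomial interpolation, working entirely over $\mathbb{Z}[q]$.
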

\begin{proof}
    Consider $q\in [-1, 0]$, i.e. $-q$ is a probability. Suppose we have a coin that flips heads with probability $-q$ and tails with probability $1+q$, and consider the random event $A_T(q)$ defined recursively on rooted trees as follows. If $T$ has children which are roots of the subtrees $T_1,\dots,T_m$, $A_T(q)$ occurs if and only if for each $k$ from $1$ to $m$, we flip our coin and it lands tails or, in the case it lands heads, $A_{T_k}(q)$ does not occur.

    Since the recursive relation computing $\text{P}(A_T(q))$, i.e.
    $$\text{P}(A_T(q)) = \prod_{k=1}^m (1+q \text{P}(A_{T_k}(q))),$$
    is the same relation defining $\varphi_T(q)$, we see that for all $q\in [-1,0]$, $\text{P}({A_T}(q))=\varphi_T(q)$. In a random trial, consider the pruning of $T$ consisting of all vertices visited in the recursive procedure, i.e. the root and all vertices for which the coin came up heads. For a pruning $S$ of $T$ to be the generated pruning, each of the coin flips corresponding to the $r(S)$ non-root vertices of $S$ must have landed heads, and each of the coin flips corresponding to the $c(S)$ vertices outside of $S$ that are incident to a vertex of $S$ must have landed tails. So $S$ is the generated pruning in a random trial with probability $(-q)^{r(S)}(1+q)^{c(S)}$.

    One also sees recursively that $A_T(q)$ occurs if and only if the generated pruning has a winning strategy for the second player. Thus, for $q\in [-1,0]$,
    $$\text{P}(A_T(q))=\sum_{\substack{S\in L_T\\\varphi_S(-1)=1}}(-q)^{r(S)}(1+q)^{c(S)}.$$

    Hence $\varphi_T(q)$ satisfies the given formula for all $q\in[-1,0]$. By polynomial interpolation, they agree for all $q$.
\end{proof}

\begin{example}
For the following rooted tree $T$,
$$\varphi_T(q)=(1+q)^2+q^2(1+q)^2+q^2(1+q)^2-q^3(1+q),$$
from the depicted prunings.
\begin{center}
    \begin{tikzpicture}
\coordinate (1) at (0,0);
\coordinate (2) at (-.25,-.5);
\coordinate (3) at (.25,-.5);
\coordinate (4) at (0, -1);
\coordinate (5) at (.5,-1);

\filldraw [black] (1) circle (2pt);
\filldraw [black] (2) circle (2pt);
\filldraw [black] (3) circle (2pt);
\filldraw [black] (4) circle (2pt);
\filldraw [black] (5) circle (2pt);

\draw[-] (1) to (2);
\draw[-] (1) to (3) to (4);
\draw[-] (3) to (5);
\node[] at (0,.5) { $T$};

\coordinate (1) at (0+3,0);

\filldraw [black] (1) circle (2pt);

\coordinate (1) at (0+3+2,0);
\coordinate (3) at (.25+5,-.5);
\coordinate (4) at (0+5, -1);

\filldraw [black] (1) circle (2pt);
\filldraw [black] (3) circle (2pt);
\filldraw [black] (4) circle (2pt);

\draw[-] (1) to (3) to (4);

\coordinate (1) at (0+7,0);
\coordinate (3) at (.25+7,-.5);
\coordinate (5) at (.5+7,-1);

\filldraw [black] (1) circle (2pt);
\filldraw [black] (3) circle (2pt);
\filldraw [black] (5) circle (2pt);

\draw[-] (1) to (3);
\draw[-] (3) to (5);

\coordinate (1) at (0+9,0);

\coordinate (3) at (.25+9,-.5);
\coordinate (4) at (0+9, -1);
\coordinate (5) at (.5+9,-1);

\filldraw [black] (1) circle (2pt);
\filldraw [black] (3) circle (2pt);
\filldraw [black] (4) circle (2pt);
\filldraw [black] (5) circle (2pt);

\draw[-] (1) to (3) to (4);
\draw[-] (3) to (5);
\node[] at (0,.5) { $T$};
\end{tikzpicture}
\end{center}
\end{example}

We conclude this section by applying Theorem~\ref{thm:pos} to prove recurrence relations for the numbers $a_n$.

\begin{proposition}
    For integers $n\ge 2$, the numbers $a_n$ satisfy the recurrence
    $$a_n=\sum_{k=1}^{n-1}{n-2\choose k-1}((k-1)!-a_{k})a_{n-k}.$$
\end{proposition}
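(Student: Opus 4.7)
By Theorem~\ref{thm:pos}, $a_n$ enumerates the increasing trees on $[n]$ that are P-positions, i.e., second-player wins. My plan is to prove the recurrence by decomposing such a tree $T$ according to the subtree rooted at the vertex labelled $2$. Note that in any increasing tree on $[n]$ with $n\ge 2$, the vertex $2$ is necessarily a child of the root $1$, since $1$ is the only label smaller than $2$. A key preliminary observation is that $\varphi_T(-1)$ depends only on the underlying unlabelled rooted tree structure of $T$ (this is evident from the recursive definition of $\varphi_T$), so relabelling a subtree order-preservingly does not change its game-theoretic status.

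Let $T$ be a P-position increasing tree on $[n]$, let $S$ denote the subtree of $T$ rooted at $2$, and set $k=|S|$. The $k-1$ vertices of $S$ other than $2$ form a subset of $\{3,\ldots,n\}$, chosen in $\binom{n-2}{k-1}$ ways. Once this subset is fixed, relabelling $S$ order-preservingly by $\{1,\ldots,k\}$ produces an arbitrary increasing tree on $[k]$; likewise, the complementary tree $T\setminus S$ (consisting of the vertices of $T$ not in $S$, rooted at $1$) relabels order-preservingly to an arbitrary increasing tree on $[n-k]$. Moreover, the children of $1$ in $T$ are exactly the vertex $2$ (the root of $S$) together with the children of $1$ in $T\setminus S$, and each resulting subgame is isomorphic as a rooted tree to the corresponding subtree.

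Now $T$ is a P-position if and only if every child of the root $1$ is an N-position, and this splits into two independent conditions. First, $S$ must be an N-position, which by the label-invariance amounts to requiring that the relabelled tree on $[k]$ is not a P-position; this contributes $(k-1)!-a_k$ choices, since there are $(k-1)!$ increasing trees on $[k]$ in total. Second, every child of $1$ other than $2$ must be an N-position, which is precisely the condition that $T\setminus S$ is itself a P-position on $[n-k]$, contributing $a_{n-k}$ choices. Summing over $k=1,\ldots,n-1$ (the extremes corresponding to $S$ being a single vertex and $T\setminus S$ being a single vertex) yields
$$a_n=\sum_{k=1}^{n-1}\binom{n-2}{k-1}\bigl((k-1)!-a_k\bigr)a_{n-k}.$$
The main subtleties are the label-invariance of $\varphi_T(-1)$ and the independence of the two conditions above; both follow from the product structure in the definition of $\varphi_T$, and once these are established the recurrence drops out of the decomposition immediately.
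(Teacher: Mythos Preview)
Your proof is correct and follows essentially the same approach as the paper: decompose a second-player-win increasing tree according to the subtree $S$ rooted at the vertex labelled $2$, count the label choices by $\binom{n-2}{k-1}$, require $S$ to be a first-player win (giving $(k-1)!-a_k$) and the remaining tree $T\setminus S$ to be a second-player win (giving $a_{n-k}$). Your added remarks on why $2$ is a child of $1$ and on the label-invariance of $\varphi_T(-1)$ are helpful clarifications that the paper leaves implicit.
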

\begin{proof}
For an increasing tree on $n$ vertices to have a winning strategy for player $2$, it is equivalent that the subtree rooted at the vertex labelled $2$ have its root be a winning position and the pruning of $T$ obtained by removing the subtree rooted at $2$ be a losing position. 

When the subtree rooted at vertex $2$ has $k$ vertices, there are ${n-2\choose k-1}$ choices for the set of labels of its vertices. There are $(k-1)!-a_k$ choices of an increasing tree with those labels that have its root be a winning position, and there are $a_{n-k}$ choices for the pruning to be a losing position. Summing the possibilities for all $k$ from $1$ to $n-1$ gives the identity.
\end{proof}
\begin{proposition}
    For positive integers $n$, the numbers $a_n$ satisfy the recurrence
    $$(n-1)!-a_n=\sum_{k=1}^{n-1}{n-1\choose k-1}(n-k-1)!a_{k}.$$
\end{proposition}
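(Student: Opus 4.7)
The plan is a bijective proof mirroring the style of the previous proposition. By Theorem~\ref{thm:pos}, $a_n$ counts increasing trees on $\{1,\ldots,n\}$ for which the second player has a winning strategy, so $(n-1)!-a_n$ counts those in which the first player wins. Rearranging the target identity as
$$(n-1)! \;=\; a_n + \sum_{k=1}^{n-1}\binom{n-1}{k-1}(n-k-1)!\,a_k,$$
I would construct a bijection between all $(n-1)!$ increasing trees on $\{1,\ldots,n\}$ and pairs $(T_1,T_2)$, where $T_1$ is a second-player-win increasing tree on a subset $S\subseteq\{1,\ldots,n\}$ containing $1$ and $T_2$ is an arbitrary increasing tree on $\{1,\ldots,n\}\setminus S$ (with $T_2$ taken to be empty when $S=\{1,\ldots,n\}$).

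The key ingredient is a ``Merge Lemma'': for any nonempty $L\subseteq\mathbb{Z}_{\ge 1}$ with $d=\min L$, there is a bijection between increasing trees on $L$ (rooted at $d$) and families $(R_1,\ldots,R_r)$, $r\ge 1$, of second-player-win increasing trees with roots $d=d_1<d_2<\cdots<d_r$ whose vertex sets partition $L$. The merge forms the increasing tree on $L$ rooted at $d$ whose children are the children of $d$ in $R_1$ together with $d_2,\ldots,d_r$, each carrying its original subtree. Its inverse reads off $d_2,\ldots,d_r$ as exactly those children of $d$ in the merged tree whose subtrees are second-player-win: each $R_j$ ($j\ge 2$) is second-player-win by hypothesis, while the other children of $d$ come from $R_1$ and are in winning positions (their parent $d$ is in a losing position in $R_1$, which forces all children of $d$ in $R_1$ to be in winning positions).

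The main bijection then sends an increasing tree $T$ on $\{1,\ldots,n\}$ to the pair $(T_1,T_2)$ obtained by stripping from $T$ the subtrees at the children $d_1<\cdots<d_r$ of $1$ whose subtrees are second-player-win (with $r=0$ if $T$ is itself second-player-win, in which case $T_1=T$ and $T_2$ is empty). The remaining children of $1$ in $T_1$ are all in winning positions, so $T_1$ is second-player-win; and $T_2$ is defined as the merge, via the Merge Lemma, of the stripped subtrees on $\{1,\ldots,n\}\setminus V(T_1)$. The inverse applies the inverse merge to $T_2$ and reattaches $d_1,\ldots,d_r$ as extra children of $1$ in $T_1$ with their recovered subtrees. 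Enumerating pairs by $k=|V(T_1)|$ yields $a_n$ from the $k=n$ case, and, for $1\le k\le n-1$, $\binom{n-1}{k-1}a_k(n-k-1)!$ from choosing $S\setminus\{1\}$, $T_1$, and $T_2$; this reproduces the rearranged identity.

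The main obstacle is verifying the Merge Lemma, and specifically that the winning/losing-position dichotomy on the children of $d$ in the merged tree really does separate the children inherited from $R_1$ from the merged-in roots $d_2,\ldots,d_r$. This follows from the observation that game values depend only on subtree structure, which the merge preserves: the subtree at $d_j$ ($j\ge 2$) in the merged tree is exactly $R_j$, and the subtree at any child of $d$ inherited from $R_1$ is the corresponding subtree of $R_1$.
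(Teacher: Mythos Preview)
Your proof is correct, but the bijection you build is genuinely different from the paper's. Both proofs match first-player-win increasing trees on $[n]$ with pairs (second-player-win tree on $S\ni 1$, arbitrary increasing tree on $[n]\setminus S$), but they split the children of the root differently. The paper locates the single least-labelled losing child $i$ of the root, sends the children of $1$ with label~$>i$ (together with their subtrees) to the second tree (re-rooted at $i$), and merges the subtree at $i$ into the root to form the first tree. You instead peel off \emph{all} losing-position children $d_1<\cdots<d_r$ of the root at once; what remains is already second-player-win, and the stripped subtrees are assembled into the second tree via your Merge Lemma. In effect the paper uses the label order as the organizing principle and your proof uses the game-theoretic status of each child. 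Your Merge Lemma is a clean standalone statement (any increasing tree decomposes uniquely into an ordered family of second-player-win trees), and it lets you phrase the bijection on all $(n-1)!$ trees rather than just the first-player-win ones; the paper's argument is a bit more direct, needing no auxiliary lemma, since only one losing child has to be handled explicitly.
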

\begin{proof}
The left hand side counts the increasing trees on vertices labelled $1,\dots,n$ in which player $1$ has a winning strategy. We will show that there is a bijection to pairs of increasing trees whose labels have disjoint union $1,\dots,n$, the first of which has root labelled $1$ and is a losing position. This is sufficient to prove the identity, because if the first tree has $k$ vertices, there are ${n-1\choose k-1}$ choices for its labels, $(n-k-1)!$ choices for the second increasing tree, and $a_k$ choices for the first increasing tree to have its root be a losing position.

Consider the least label $i$ among losing positions that are children of the root of the increasing tree. Construct the pair comprised by the pruning consisting of the root connected to the subtrees rooted at vertices labelled $\le i$ among the children of the root, and the pruning consisting of all remaining edges. Since the children of the root are all $>i$ in the second pruning, we can change the label of its root to $i$ while keeping the increasing property. In the first pruning, we delete the edge connecting $1$ to $i$ and identify the two vertices, retaining the label $1$. The first increasing tree is now rooted at a losing position. This completes the construction of the bijection.

To undo this process, suppose we instead start with a pair of increasing trees whose labels have disjoint union $1,\dots,n$, the first of which has root labelled $1$ and is a losing position. Let the label of the root of the second increasing tree be $i$. In the first increasing tree, take all children of the root with labels $>i$ and make it so that they are now children of a new vertex labelled $i$, which in turn will be a child of the root. Then identify the roots of the new pair of increasing trees, retaining the label $1$. We obtain an increasing tree with labels $1,\dots,n$ that is rooted at a winning position.
\end{proof}

\section{Cell complexes associated to rooted trees}\label{sec:cell}
In our final section, we construct a projective variety associated to a rooted tree $T$, and show that over $\mathbb R$ and $\mathbb C$, it has a cell structure related to the lattice $L_T$. We will show that over $\mathbb R$, the Euler characteristic of this variety tells us whether $T$ is a game tree in which the first or second player has a winning strategy.

We begin by defining the projective variety $X_T$ associated with a rooted tree over an arbitrary field. 

\begin{definition}
    The projective variety $X_T\subseteq \mathbb P^{\varphi_T(1)-1}$ is defined recursively as follows:
    \begin{enumerate}
        \item if $T$ consists of a single point, $X_T=\mathbb P^0$,
        \item if $T$ has a single child which is the root of subtree $T_1$, is carved out by the equations of $X_{T_1}\subseteq \mathbb P^{\varphi_{T_1}(1)-1}$ applied to the first $\varphi_T(1)-1$ coordinates, and
        \item if $T$ consists of multiple rooted trees $T_1',\dots, T_m'$ joined at their roots, where the root of each $T_k'$ has a single child, then $X_T=X_{T_1'}\times\cdots\times X_{T_m'}$, via the Segre embedding $\mathbb P^{\varphi_{T_1'}(1)-1}\times\cdots\times \mathbb P^{\varphi_{T_m'}(1)-1}\subseteq \mathbb P^{\varphi_T(1)-1}.$
    \end{enumerate}
\end{definition}
\begin{proposition}
    $X_T$ may be written as a disjoint union of affine cells $\bigcup_{x\in L_T} X_x$ such that
    \begin{enumerate}
        \item the dimension of $X_x$ is the rank of $x$ in $L_T$, i.e. $X_x\cong \mathbb A^{\rk(x)}$, and
        \item $X_x(\mathbb R),X_x(\mathbb C)$ is in the closure $X_y(\mathbb R),X_y(\mathbb C)$, respectively, if and only if $x\le y$ in $L_T$.
    \end{enumerate}
\end{proposition}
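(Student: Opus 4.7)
The plan is to induct on the number of vertices of $T$, following the three cases in the recursive definition of $X_T$. At each step I explicitly construct the cells $X_x$ and verify (1) together with the ``if'' direction of (2); the ``only if'' direction is then automatic, since pairwise disjointness of the cells together with $\overline{X_y} = \bigsqcup_{z\le y} X_z$ forces $x \le y$ whenever $X_x \subseteq \overline{X_y}$.

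The base case $T = \{*\}$ is trivial, with $X_{\hat 0} = X_T = \mathbb P^0$. For the single-child case, $T$ has a root with one child rooting a subtree $T_1$, so $X_T$ is the projective cone on $X_{T_1}$ with some apex $p_0$, while $L_T$ acquires a new minimum $\hat 0$ below a copy of $L_{T_1}$ with ranks shifted up by one. I declare $X_{\hat 0} = \{p_0\}$ and, for $y \in L_{T_1}$, take $X_y$ to be the preimage of the inductively constructed $X_y^{T_1}$ under the projection $X_T \setminus \{p_0\} \to X_{T_1}$ away from the apex. This projection is an $\mathbb A^1$-bundle, which trivializes over the affine cell $X_y^{T_1} \cong \mathbb A^{\rk_{L_{T_1}}(y)}$, giving $X_y \cong \mathbb A^{\rk_{L_{T_1}}(y)+1} = \mathbb A^{\rk_{L_T}(y)}$. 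The apex lies in the Euclidean and Zariski closure of every other cell: rescaling a point of $X_T \setminus \{p_0\}$ so that the apex coordinate is $1$ and letting the non-apex coordinates tend to $0$ sends the point to $p_0$. Moreover, the closure of $X_y$ inside $X_T \setminus \{p_0\}$ pulls back $\overline{X_y^{T_1}}$, so the closure condition follows from induction.

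In the multiple-children case, $T$ is the join of trees $T_1', \dots, T_m'$ at their roots, each with a single-child root, and $X_T = X_{T_1'} \times \cdots \times X_{T_m'}$ Segre-embedded with $L_T \cong \prod_k L_{T_k'}$ and ranks adding. Using the cell decompositions on each factor from induction, I set $X_{(y_1, \dots, y_m)} = \prod_k X_{y_k}^{T_k'}$, a product of affine spaces with total dimension $\sum_k \rk_{L_{T_k'}}(y_k) = \rk_{L_T}(\mathbf y)$. Closure in a product of projective varieties, in either Zariski or Euclidean topology, is the product of closures, so the order-closure condition for $L_T$ inherits from the conditions for each $L_{T_k'}$.

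The main obstacle is making the cone-case cell construction airtight: ensuring that each $X_y$ is genuinely isomorphic to an affine space (not merely a stratified affine bundle), and that $p_0$ is actually a Euclidean limit of each other cell over both $\mathbb R$ and $\mathbb C$. Triviality of the $\mathbb A^1$-bundle over the affine cell follows from $\mathrm{Pic}(\mathbb A^n) = 0$, or concretely from restricting a regular section of the $\mathbb G_m$-bundle $\mathbb A^{\varphi_{T_1}(1)} \setminus \{0\} \to \mathbb P^{\varphi_{T_1}(1)-1}$ to the affine cell $X_y^{T_1}$; the Euclidean limit claim is immediate from the explicit rescaling argument, which works identically over either field.
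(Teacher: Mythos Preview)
Your proof is correct and follows essentially the same inductive strategy as the paper: base case, projective-cone case, and Segre-product case, with the cell structure and closure relations inherited at each step. If anything you are more careful than the paper, which simply writes ``the product cell decomposition of $X_{T_1}\times\mathbb A^1$'' in the cone case without justifying that the $\mathbb A^1$-bundle trivializes over each inductive affine cell---your appeal to $\mathrm{Pic}(\mathbb A^n)=0$ (or the explicit section) fills exactly that gap.
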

\begin{proof}
We proceed by inductively. If $T$ consists of a single vertex, then $X_T=\mathbb P^0$, which is a single affine cell $\mathbb A^0$.

If the root of $T$ has a single child, the root of $T_1$, $X_T$ consists of all nonzero points $(\mathbf a:b)\in \mathbb P^{\varphi_T(1)-1}$ such that $\mathbf a$ satisfies the equations carving out $X_{T_1}$ in $\mathbb P^{\varphi_{T_1}(1)-1}$. When $\mathbf a\neq \mathbf 0$, the points can be given the product cell decomposition of $X_{T_1}\times\mathbb A^1$, and the inclusion relations between cells and closures of cells over $\mathbb R$ and $\mathbb C$ is inherited. When $\mathbf a=\mathbf 0$, we have a single point $(\mathbf 0:1)$, which is a cell $\mathbb A^0$, and it lies in the closure of every cell when $X_T$ is taken over $\mathbb R$ or $\mathbb C$. The inductive hypothesis is then satisfied because $L_T$ is the lattice $L_{T_1}$ with a new minimum element adjoined.

Otherwise, suppose $T$ consists of multiple rooted trees $T_1',\dots, T_m'$ joined at their roots, where the root of each $T_k'$ has a single child. Then we can take the product cell structure of $T_1'\times\cdots\times T_m'$, which is consistent with $L_T\cong L_{T_1'}\times\cdots\times L_{T_m'}$.
\end{proof}
\begin{corollary}\label{cor:geo}
For rooted trees $T$,
    \begin{enumerate}
        \item $\varphi_T(q)$ counts the number of points in $X_T(\mathbb F_q)$ whenever $q$ is a prime power,
        \item $\varphi_T(-1)=\chi(X_T(\mathbb R))$, and
        \item $\varphi_T(1) = \chi(X_T(\mathbb C))$.
    \end{enumerate}
\end{corollary}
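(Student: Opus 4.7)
The plan is to exploit the affine cell decomposition $X_T = \bigsqcup_{x \in L_T} X_x$ with $X_x \cong \mathbb{A}^{\rk(x)}$ from the preceding proposition, summing the contribution of each cell. In every case the total will be $\rgf_{L_T}$ evaluated at the appropriate value, which equals $\varphi_T$ at that value by Proposition~\ref{prop:gamergf}.

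Part (1) is immediate: since $|\mathbb{A}^k(\mathbb{F}_q)| = q^k$, the decomposition gives
$$|X_T(\mathbb{F}_q)| = \sum_{x \in L_T} q^{\rk(x)} = \rgf_{L_T}(q) = \varphi_T(q).$$

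For parts (2) and (3), I would use additivity of the compactly supported Euler characteristic over stratifications into locally closed pieces, together with the standard values $\chi_c(\mathbb{R}^k) = (-1)^k$ and $\chi_c(\mathbb{C}^k) = 1$. Because $X_T$ is projective, both $X_T(\mathbb{R})$ and $X_T(\mathbb{C})$ are compact in the analytic topology, so $\chi_c = \chi$ for both. Summing the cell contributions yields
$$\chi(X_T(\mathbb{R})) = \sum_{x \in L_T} (-1)^{\rk(x)} = \rgf_{L_T}(-1) = \varphi_T(-1),$$
and
$$\chi(X_T(\mathbb{C})) = \sum_{x \in L_T} 1 = \rgf_{L_T}(1) = \varphi_T(1).$$

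The main subtlety is justifying additivity of Euler characteristic over the cell decomposition. Over $\mathbb{C}$ this is easy: the closure containment established in the preceding proposition promotes the decomposition to a CW structure with all cells of even real dimension, so additivity of ordinary Euler characteristic is standard. Over $\mathbb{R}$ the cells $\mathbb{R}^{\rk(x)}$ are not disks, so one cannot directly read off a CW structure; instead one invokes additivity of compactly supported Euler characteristic, applied inductively to an open--closed pair via the long exact sequence in compactly supported cohomology. Since $X_T(\mathbb{R})$ is compact, this compactly supported computation coincides with the ordinary Euler characteristic, completing the argument.
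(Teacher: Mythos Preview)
Your argument is correct and follows the same line as the paper's: decompose $X_T$ into the affine cells $X_x \cong \mathbb{A}^{\rk(x)}$ indexed by $L_T$ and sum their contributions, obtaining $\rgf_{L_T}$ at the relevant value, which equals $\varphi_T$ by Proposition~\ref{prop:gamergf}.

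The only difference is in how the Euler characteristic computation over $\mathbb{R}$ is justified. The paper simply asserts that the paving by affines makes $X_T(\mathbb{R})$ a finite cell complex with cells of real dimension $\rk(x)$ and then takes the alternating sum. You are more cautious, noting that an affine paving over $\mathbb{R}$ does not automatically furnish a CW structure, and you instead invoke additivity of the compactly supported Euler characteristic together with compactness of $X_T(\mathbb{R})$. That is a cleaner justification. One small phrasing point: each $\mathbb{R}^{\rk(x)}$ \emph{is} homeomorphic to an open disk; the genuine obstruction is the lack of characteristic maps from closed disks whose boundaries land in lower skeleta, not the shape of the open cells themselves.
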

\begin{proof}
    Part (1) holds because each $X_x(\mathbb F_q)$ has $q^{\rk(x)}$ points. For parts (2) and (3), note our paving by affines makes $X_T(\mathbb R)$ and $X_T(\mathbb C)$ finite cell complexes with $X_x(\mathbb R),X_x(\mathbb C)$ cells of real dimensions $\rk(x),2\rk(x)$, respectively. The result follows since the Euler characteristic of a finite cell complex is the alternating sum of the number of cells in each real dimension.
\end{proof}
\begin{remark}
    $\varphi_T(q^2)$ is the Poincar\'e polynomial of $X_T(\mathbb C)$ but note in general $\varphi_T(q)$ is not the Poincar\'e polynomial of $X_T(\mathbb R)$.
\end{remark}
Combining Lemma~\ref{lem:win}, Proposition~\ref{prop:gamergf}, and Corollary~\ref{cor:geo} gives the following.
\begin{theorem}
    For a rooted tree $T$, either
    \begin{enumerate}
        \item Player $1$ has a winning strategy and $\rgf_{L_T}(-1)=\chi(X_T(\mathbb R)) = 0$, or
        \item Player $2$ has a winning strategy and $\rgf_{L_T}(-1)=\chi(X_T(\mathbb R))=1$.
    \end{enumerate}
\end{theorem}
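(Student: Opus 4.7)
The plan is to chain together the three cited ingredients, which already do all the real work. By Zermelo's theorem, recalled earlier in Section~\ref{sec:game}, exactly one of the two players has a winning strategy on the game tree $T$, so one gets a clean dichotomy to organize the proof around. The first step is therefore to fix $T$ and split into the two cases according to which player wins.

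Within each case, I would first apply Lemma~\ref{lem:win}, which computes $\varphi_T(-1)$ explicitly: the value is $1$ precisely when Player 2 has a winning strategy and $0$ precisely when Player 1 has a winning strategy. Next, invoke the polynomial identity $\varphi_T(q) = \rgf_{L_T}(q)$ from Proposition~\ref{prop:gamergf}; specializing to $q=-1$ converts the value of $\varphi_T(-1)$ into the value of $\rgf_{L_T}(-1)$. Finally, apply Corollary~\ref{cor:geo}(2) to rewrite $\varphi_T(-1)$ as $\chi(X_T(\mathbb{R}))$. Concatenating these three equalities in each case yields exactly the two alternatives in the theorem statement.

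The only real issue is bookkeeping: one should check that the zero/one dichotomy on the game-theoretic side matches the zero/one dichotomy coming from the geometry via the chain $\rgf_{L_T}(-1) = \varphi_T(-1) = \chi(X_T(\mathbb{R}))$, but this is automatic from the results quoted. There is no genuine obstacle, since the combinatorial content (the lattice of prunings), the game-theoretic content (Zermelo plus the recursion for losing positions), and the geometric content (the affine paving of $X_T$) have all been established in the preceding sections; the final theorem is essentially their common specialization at $q = -1$.
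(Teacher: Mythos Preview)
Your proposal is correct and mirrors the paper's own proof, which simply states that the theorem follows by combining Lemma~\ref{lem:win}, Proposition~\ref{prop:gamergf}, and Corollary~\ref{cor:geo}. Your inclusion of Zermelo's theorem to justify the dichotomy is a reasonable explicit step, though it is already implicit in Lemma~\ref{lem:win}, which asserts $\varphi_T(-1)\in\{0,1\}$ with the value determined by the winner.
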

\begin{remark}
    The value of $\rgf_{L_T}(-1)$ depends only on the parity of the number of prunings of $T$. If ever challenged to a finite two-player game of perfect information and alternating moves, with the opponent making the mistake of granting the choice to go first or second, one can simply count the number of prunings of the game tree or compute the Euler characteristic of associated real variety to inform their decision.
\end{remark}

\section{Acknowledgments}\label{sec:ack}  The author would like to thank God for inspiration.

\bibliographystyle{plain}

\end{document}